\newtheorem{theorem}{Theorem}[section]
\newtheorem{proposition}[theorem]{Proposition}
\newtheorem{lemma}[theorem]{Lemma}
\newtheorem{corollary}[theorem]{Corollary}
\theoremstyle{definition}
\newtheorem{remark}[theorem]{Remark}
\newcommand{\R}{\mathbb{R}}
\newcommand{\E}{\operatorname{{\bf E}}}
\newcommand{\PP}{\operatorname{{\bf P}}}
\renewcommand{\H}{\mathbb{H}}
\newcommand{\bd}{\partial}
\newcommand{\dist}{\operatorname{dist}}
\newcommand{\indicate}[1]{\mathbf{1} \left \{ #1 \right \}}
\newcommand{\1}[1]{\indicate{#1}}
\newcommand{\F}{\mathcal{F}}
\newcommand{\haussdim}{\mathrm{dim_H} \,}
\newcommand{\hdim}{\dim_{\operatorname{H}}}
\newcommand{\mN}{\mathcal{N}}
\let \le \leqslant
\let \leq \leqslant
\let \ge \geqslant
\let \geq \geqslant
\newcommand{\ee}{\epsilon}
\begin{document}

\title{A Dimension Spectrum for SLE Boundary Collisions}

\author{Tom Alberts
\thanks{alberts@math.utah.edu}}
 \affil{University of Utah}
\author{Ilia Binder%
\thanks{ilia@math.toronto.edu}}
\affil{University of Toronto}
\author{Fredrik Viklund
\thanks{fredrik.viklund@math.kth.se}}
\affil{KTH Royal Institute of Technology}
\date{}

\maketitle

\begin{abstract}
We consider chordal SLE$_\kappa$ curves for $\kappa > 4$, where the intersection of the curve with the boundary is a random fractal of almost sure Hausdorff dimension $\min\{2-8/\kappa,1\}$. We study the random sets of points at which the curve collides with the real line at a specified ``angle'' and compute an almost sure dimension spectrum describing the metric size of these sets. We work with the forward SLE flow and a key tool in the analysis is Girsanov's theorem, which is used to study events on which moments concentrate. The two-point correlation estimates are proved using the direct method.
\end{abstract}

\section{Introduction}

The Schramm-Loewner evolution curves (SLE$_\kappa$, $\kappa > 0$) appear as scaling limits of interfaces in a number of discrete planar lattice models from statistical mechanics and have become an essential tool in the study of these models.
The geometry of SLE$_\kappa$ is interesting in its own right and depends strongly on the parameter $\kappa > 0$.
If $\kappa > 4$, the curves have non-traversing self-intersections and collide with the boundary of their domain, which is usually taken to be the upper half-plane $\mathbb{H}$. In this paper we study a multifractal spectrum describing the fine geometry of SLE$_\kappa$ curves' boundary collisions when $\kappa >4$.  It is well-known that the points on the boundary of $\H$ that are hit by the curve form a random fractal of dimension $\min\{2-8/\kappa,1\}$ \cite{AS:dimension, SZ:boundary_proximity}. We will prove that this fractal can be decomposed into a family of sets of smaller dimensions according to the ``angle'' at which the curve hits the boundary.

The deterministic case provides some motivation for studying this problem. Consider the half-plane Loewner equation
\begin{align*}
\partial_t g_t(z) = \frac{2}{g_t(z) - U_t}, \quad g_0(z) = z,
\end{align*}
with $U_t$ a continuous real-valued function called the driving term. The family of solutions $(g_t)$ is called a Loewner chain.  If $U_t$ is sufficiently regular then there is a curve $\gamma = \gamma(t)$ that generates this chain, in the sense that $g_t$ is a conformal map from the unbounded connected component $H_t$ of $\H \setminus \gamma[0,t]$ onto $\H$. We write $h_t = g_t - U_t$ for the recentered maps. This gives a correspondence between driving terms and Loewner curves, but translating properties of the driving term into geometric properties of the curve is often difficult.

For some driving terms the corresponding curve hits the real line, and boundary collisions for Loewner curves with deterministic driving terms are studied in \cite{LMR:collisions}. Here the interesting setting is the ``critical'' case of driving terms in the H\"{o}lder-$1/2$ class with seminorm not too small. The model example is the curve driven by $U_t = k \sqrt{1-t}$ with $k > 4$. In this case one can see that the curve stays in the half-plane until time $1$ when it hits the boundary. It turns out that the curve meets the real line at the specific angle
\begin{align}\label{angle-limit}
\lim_{t \to 1} \arg \left( \gamma(t) - \gamma(1) \right) = \pi \frac{1 - \sqrt{1 - 16/k^2}}{1 + \sqrt{1 - 16/k^2}}.
\end{align}
In fact this is true in a certain local sense for curves driven by a H\"{o}lder-$1/2$ function with $\lim_{t \to 1} (1-t)^{-1/2} U_t > 4$, see \cite{LMR:collisions} for these results.

The SLE$_\kappa$ process is defined by taking the Loewner driving term to be $\sqrt{\kappa}$ times a standard Brownian motion. This Brownian driving function is not H\"{o}lder-$1/2$, but for $\kappa > 4$ the corresponding curve almost surely has boundary collisions. We will show that an analogue of the angular behavior described above is still present. Of course, we cannot expect a statement as strong as \eqref{angle-limit} to hold for the rough SLE curve. Instead we will consider a natural generalization involving a decay rate of harmonic measure, analogous to the customary interpretation of the local dimension of harmonic measure as describing a generalized (inverted) angle \cite{makarov_fine}. Suppose $\gamma$ is a Loewner curve as in \eqref{angle-limit} and that it hits $\mathbb{R}$ at $x$. (By this we mean that $\dist(x, \gamma[0, \infty)) = 0$, which is slightly weaker than requiring that there is $t_x$ such that $\lim_{t \uparrow t_x}\gamma(t) = x$.) For $s \ge 0$, let $\tau_s=\tau_s(x)$ be the first time $\gamma$ comes within distance $e^{-s}$ of $x$:
\begin{align*}
\tau_s(x) = \inf \left \{ t \geq 0 : \dist(x, \gamma[0,t]) \leq e^{-s} \right \}.
\end{align*}
Any crosscut of $H_{\tau_s}$ connecting $x$ with $\gamma(\tau_s)$ partitions $H_{\tau_s}$ into exactly two components, one bounded and one unbounded. We let $E_s$ be the part of $\partial H_{\tau_s}$ that is in the boundary of the bounded component, and we consider the (normalized) harmonic measure from infinity of $E_s$
\[
\omega_{\infty}(E_s) := \lim_{y \to \infty} y\omega(iy, E_s, H_{\tau_s}).
\]
For the curves described in \eqref{angle-limit} it is not hard to check that there is a constant $c \in (0,\infty)$ such that as $s \to \infty$,
\[
c^{-1} e^{-\alpha s} \le \omega_{\infty}(E_s) \le c e^{-\alpha s}, \quad \alpha = \frac{\pi}{\pi-\theta },
\]
where $\theta$ is given by the right-hand side of \eqref{angle-limit}. Moreover, which in this case is equivalent, for a different constant $c$,
\[
c^{-1} e^{-\beta s} \le h'_{\tau_s}(x) \le c e^{-\beta s}, \quad \beta = \alpha-1.
\]
For the SLE curves it is too restrictive to consider up-to constants asymptotics, so we shall allow sub-exponential corrections. More precisely, if $(h_t)$ is the (centered) SLE$_\kappa$ forward Loewner chain, then for $\beta \in \R$ we consider the random sets $V_{\beta}$ of points on the line at which the derivative $h_{\tau_s}'(x)$ has a prescribed exponential rate of decay:
\begin{align}\label{eqn:V_b_defn}
V_{\beta} := \left \{ x \in \R_+ : \lim_{s \to \infty} \frac{1}{s} \log h'_{\tau_s}(x) = -\beta, \, \tau_s = \tau_s(x) < \infty \,\, \forall \,\, s > 0 \right \}.
\end{align}
We may interpret this as describing a generalized hitting ``angle'' of the real line. Our main result is:
\begin{theorem}\label{thm:main}
Let $\kappa > 4, a = 2/\kappa,$ and $\beta \in [\beta_-, \hat{\beta}]$ with
\begin{align*}
\beta_{-} := \frac{a}{\frac32 + \sqrt{2-4a} - 2a}, \quad
\hat{\beta} : = \frac{2a}{4a-1}.
\end{align*}
Then almost surely
\begin{align*}
\hdim V_{\beta} = d(\beta) := \frac32 + \beta \left(1 - \frac{1}{8a} \right) - a \frac{(1 + 2\beta)^2}{2 \beta}.
\end{align*}
\end{theorem}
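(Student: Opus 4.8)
The plan is to prove the two inequalities $\hdim V_\beta \le d(\beta)$ and $\hdim V_\beta \ge d(\beta)$ separately, by a first moment (covering) argument and a second moment (energy) argument, both powered by a single one-point estimate obtained through Girsanov's theorem. Fix $x>0$ and write $h_t=h_t(x)$; under SLE$_\kappa$ one has $dh_t = (2/h_t)\,dt - \sqrt\kappa\,dB_t$ and $\partial_t\log h_t'(x) = -2h_t^{-2}$. For real $\nu$ and $\lambda = \nu + \tfrac{1}{2a}\nu(\nu-1)$ the process $M_t = h_t(x)^\nu h_t'(x)^\lambda$ is a positive local martingale, and for $\beta$ in the stated range it is in fact a genuine martingale up to the relevant stopping time (the endpoint $\beta_-$ is, among other things, the value at which $d(\beta)=0$, while $\hat\beta$ is the maximizer of $d$ --- a useful consistency check, since $d(\hat\beta) = 2-8/\kappa$). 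Let $\mathbf Q$ be the tilted law with $\tfrac{d\mathbf Q}{d\mathbf P}\big|_{\F_t} = M_t/M_0$. By Girsanov, under $\mathbf Q$ the process $h_t(x)$ is $\sqrt\kappa$ times a Bessel process of dimension $\delta = 1 + 2a + 2\nu$. Writing $-\log h_t'(x) = 2\int_0^t h_s^{-2}\,ds = a\,\theta_t$ in terms of the Bessel clock $\theta_t = \kappa\int_0^t h_s^{-2}\,ds$ and comparing with $-\log h_t(x) \sim \tfrac{2-\delta}{2}\theta_t$, one finds that under $\mathbf Q$ the ratio $\tfrac1s\log h_{\tau_s}'(x)$ tends almost surely to $-\beta$ exactly when $\nu = \tfrac12 - 2a - \tfrac a\beta$, equivalently $\delta = 2 - 2a - 2a/\beta$. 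With this choice, if $G_s(x)$ is the event that the curve comes within $e^{-s}$ of $x$ at a time when $h'(x)$ is near $e^{-\beta s}$ (so that also $h(x)$ is near $e^{-(\beta+1)s}$), then $\PP(G_s(x)) = \E_{\mathbf Q}[(M_0/M_{\tau_s})\,\mathbf 1_{G_s(x)}]$; on $G_s(x)$ one has $M_{\tau_s}/M_0 \asymp e^{s q(\beta)}$ with $q(\beta) := -(\beta+1)\nu - \beta\lambda$, and $\mathbf Q(G_s(x)) \to 1$ by the concentration just described, so $\PP(G_s(x)) = e^{-s(q(\beta)+o(1))}$ as $s\to\infty$. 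A short algebraic check gives $1 - q(\beta) = d(\beta)$.

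For the upper bound, fix $\eta>0$. A standard covering argument puts $V_\beta$, for $s$ large, inside the union of the intervals of length $e^{-s}$ around those grid points $x \in e^{-s}\Z$ for which the curve comes within $2e^{-s}$ of $x$ at a time when $h_t'(x) \in [e^{-(\beta+\eta)s}, e^{-(\beta-\eta)s}]$. By the one-point estimate the expected number of such intervals meeting a fixed bounded set is at most $e^{s}\cdot e^{-s(q(\beta)-C\eta)}$, so along a geometric sequence $s_n\to\infty$ a Markov--Borel--Cantelli argument gives, almost surely, a cover of $V_\beta$ by intervals whose $(d(\beta)+C\eta)$-content stays bounded as $n\to\infty$; hence $\hdim V_\beta \le d(\beta) + C\eta$ almost surely, and $\eta\downarrow0$ finishes this half.

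For the lower bound, fix $d'<d(\beta)$ and carry out a multi-scale construction. Let $A^{(n)}(x)$ be the event that $h_{\tau_s}'(x)$ stays in a slowly shrinking window $[e^{-(\beta+\eta_n)s}, e^{-(\beta-\eta_n)s}]$ for all $s$ in a block $[s_{n-1}, s_n]$, and set
\[
\mu_n \;=\; \sum_{x \in e^{-s_n}\Z \cap (0,1]} \frac{e^{-s_n}}{\PP(A^{(n)}(x))}\,\mathbf 1_{A^{(n)}(x)}\,m_{I_x},
\]
where $m_{I_x}$ is the uniform probability measure on the interval $I_x$ of length $e^{-s_n}$ at $x$; then $\E[\mu_n((0,1])] \asymp 1$ by construction. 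Because the whole relevant trajectory concentrates under $\mathbf Q$ (a law of large numbers for the Bessel clock), one controls both moments of $\mu_n((0,1])$ and the one-point exponent for $A^{(n)}$; the crucial input is the \emph{two-point estimate}
\[
\PP\bigl(A^{(n)}(x)\cap A^{(n)}(y)\bigr) \;\lesssim\; \PP\bigl(A^{(n)}(x)\bigr)\,\PP\bigl(A^{(n)}(y)\bigr)\,|x-y|^{-q(\beta)+o(1)},\qquad |x-y| \ge e^{-s_n},
\]
which I would prove by the direct method: run the forward flow until the first time $\sigma$ that the curve comes within distance $\asymp |x-y|$ of the pair $\{x,y\}$ with the derivatives at both points still on track --- up to $\sigma$ the two events essentially coincide and their joint probability is $\asymp |x-y|^{q(\beta)}$ --- and then use the domain Markov property together with Koebe-type distortion estimates to decouple the two remaining approaches, each of which costs $\asymp (e^{-s_n}/|x-y|)^{q(\beta)}$. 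Feeding this into $\E[\iint |u-v|^{-d'}\,d\mu_n(u)\,d\mu_n(v)]$, the off-diagonal double sum is bounded uniformly in $n$ precisely because $d' < 1 - q(\beta) = d(\beta)$, while the diagonal contributes $O(e^{-(d(\beta)-d')s_n})$. A Paley--Zygmund argument then gives $\mu_n((0,1]) \not\to 0$ with positive probability; a subsequential weak limit $\mu$ has finite $d'$-energy and is carried by a subset of $V_\beta$ (the window definition along every block forces the exact decay rate), so Frostman's lemma yields $\hdim V_\beta \ge d'$ with positive probability. A zero-one law --- obtained by running the construction on all of $\R_+$ and invoking translation invariance of the driving Brownian motion --- then upgrades this to an almost sure statement, and $d'\uparrow d(\beta)$ concludes.

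I expect the two-point estimate to be the genuinely hard step. Making the heuristic ``the two collision events agree until scale $|x-y|$ and then decouple'' rigorous requires a careful choice of the stopping time $\sigma$ at which the curve is declared ``near the pair,'' uniform distortion control on the maps $h_\sigma$ afterwards (so that each marked point really faces an asymptotically independent copy of the one-point problem after rescaling), and enough uniformity in $x,y$ and across the scales. The intertwined difficulty is that $V_\beta$ is defined by an exact exponential rate --- a limit, not merely two-sided bounds --- so every estimate above must be accompanied by quantitative large-deviation control for the Bessel clock under $\mathbf Q$, uniformly over the relevant window of scales, and one must verify that $M_t$ is a genuine martingale throughout $\beta \in [\beta_-, \hat\beta]$.
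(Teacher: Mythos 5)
Your strategy is essentially the paper's: tilt by the one\--point local martingale $h_t(x)^{\nu}h_t'(x)^{\lambda}$, use Girsanov to read off the Bessel dimension under the weighted law and hence the value $\beta$ on which the $\lambda$-moment concentrates, prove the upper bound by a first-moment covering argument, and the lower bound by a Frostman/second-moment argument driven by a two-point estimate proved by the direct method. Your exponent bookkeeping is consistent with the paper's (your $(\nu,\lambda)$ correspond to the paper's $(-\nu,\lambda+\nu)$, and $1-q(\beta)=d(\beta)$ does check out, as does $d(\hat\beta)=2-8/\kappa$).

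Two gaps are worth naming. First, your zero--one law is justified by ``translation invariance of the driving Brownian motion,'' but chordal SLE is not translation invariant along $\R$ (it is rooted at $0$). The correct argument is scale invariance: $V_\beta\cap(0,x)$ has the law of $x\,(V_\beta\cap(0,1))$, so $\hdim(V_\beta\cap(0,x))$ has an $x$-independent law; its limit as $x\downarrow 0$ is $\F_{0+}$-measurable and Blumenthal's law makes it a.s.\ constant. Second, and more substantially, your two-point mechanism --- a single stopping time $\sigma$ at which the curve is ``near the pair'' at scale $|x-y|$, followed by decoupling --- is too coarse: after reaching scale $|x-y|$ the curve may approach $y$ down to any intermediate scale $e^{-k}$ before completing its approach to $x$, and these histories cost different amounts. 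The paper's proof decomposes according to the order of the approaches and sums over the intermediate scale $k$ (its events $A_n$, $B_n$, $B_{k,n}$), using a conformally invariant (excursion-measure) form of the one-point estimate to control the geometry after the first approach; moreover the concentration indicators must be kept inside the two-point expectation, since without them the joint moment concentrates on the wrong exponent as $y\to x$. Two smaller points in the same vein: the claim $h_{\tau_s}(x)\approx e^{-(\beta+1)s}$ on your event $G_s$ requires controlling the ratio of $\dist(x,\gamma)$ to $h_{\tau_s}(x)/h'_{\tau_s}(x)$ (the paper's $A_t$ diffusion and its invariant density under the tilted law), and the quantitative $O(\sqrt s)$ concentration, uniformly over blocks of scales, is obtained in the paper from exponential moment bounds via the auxiliary martingales $\tilde M^{\nu(\lambda+\delta)}/\tilde M^{\nu(\lambda)}$ --- an ergodic theorem alone does not suffice.
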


\begin{remark}
Note that  $d(\beta)$ is actually non-negative on a larger interval $\beta \in [\beta_{-}, \beta_{+}]$, where $\beta_{+}$ is given by $a/\left(3/2 - \sqrt{2-4a} - 2a \right)$.
The dimension function $d(\beta)$ achieves its maximal value of $2-4a$ (which is the dimension of $\gamma \cap \R_+$) at $\beta = \hat{\beta}$ (which corresponds to $\lambda = 0$ and $\nu = 4a-1$). To the left of $\hat{\beta}$ the function is increasing and to the right it is decreasing. Our theorem is only stated for $\beta \in [\beta_-, \hat{\beta}]$, which implies $\lambda \ge 0$, but most of our results still hold for the whole range $\beta \in [\beta_{-}, \beta_+]$. In particular, we prove the almost sure upper bound on the dimension of $V_{\beta}$ for this entire range. (Essentially, we also prove a suitably formulated expectation dimension result for the whole range.) However, we only prove the correlation estimate and lower bound on the almost sure dimension for the range of $\lambda \ge 0$ such that $\beta \in [\beta_-, \hat \beta]$. This is similar to the restriction for the tip spectrum in \cite[Section 6]{JL:tip}.
\end{remark}

\begin{remark}Here is a rough idea for the proof of Theorem~\ref{thm:main}. Each $\beta$ corresponds to a particular $\lambda$ with the property that $\E \left[h'_{\tau_s}(x)^\lambda  \1{\tau_s < \infty}\right]$ concentrates on the event that $h'_{\tau_s}(x) \approx e^{-\beta s}$. If $\E \left[h'_{\tau_s}(x)^\lambda \1{\tau_s < \infty} \right] \asymp e^{-\nu s}$ with $\nu=\nu(\lambda)$, then this means one roughly has $\PP\left\{\tau_s < \infty,\, h'_{\tau_s}(x) \approx e^{-\beta s}\right\} \approx e^{-(\nu-\lambda\beta)s}$. This predicts the dimension function $d(\beta) = 1 - (\nu-\lambda \beta)$ and essentially proves it in the sense of expectation dimension.

The proof actually starts by studying the moments of $h'_{\tau_s}$ for a suitable range of $\lambda$. The value of $\beta$ appears by analyzing the behavior in the measure weighted by a martingale given by (a time-change of) $h'_{t}(x)^\lambda$ times a compensator via Girsanov's theorem. A bound on an exponential moment shows that $-\log h'_{\tau_s}(x) = \beta s + O(\sqrt{s})$ with high probability in the weighted measure. The construction of a Frostman measure requires sharp upper bounds on
\[
\E\left[h'_{\tau_s}(x)^\lambda h'_{\sigma_s}(y)^\lambda I_s(x) I_s(y)\right], \quad (\tau_s=\tau_s(x), \, \, \sigma_s=\tau_s(y)).
\]
Here $I_s(x)$ can be thought of as the indicator of the event that $\tau_s<\infty, \, h'_{\tau_s}(x) \approx e^{-\beta s}$. The detailed one-point analysis is crucial for this step, which is performed by looking at the various ways in which the curve can get close to the points, and estimating accordingly.
\end{remark}

With a little bit of work, the main result can also be phrased in terms of decay of harmonic measure. Letting
\[
\Xi_\alpha:=\left\{x>0: x \in \gamma[0, \infty); \, \lim_{s \to \infty} \frac{1}{s} \log \omega_\infty\left(E_s, H_{\tau_s}\right) = -\alpha  \right\},
\]
where $E_s$ is as defined above, the dimension function becomes
\[
\haussdim \, \Xi_\alpha =\frac{1}{2}+\frac{\kappa}{16}+ \alpha\left(1 -\frac{\kappa}{16}\right) -\frac{(1-2 \alpha )^2}{\kappa(\alpha-1)}.
\]
We will comment on this in Section~3.

Besides Theorem~\ref{thm:main} there are two almost sure dimension spectrum results for SLE. The first result appeared in \cite{JL:tip}, which uses the reverse flow to study the behavior of the deriviative close to the tip of the growing SLE curve. That paper builds on work in \cite{Law:mf, JL:optimal_exponent} and the main correlation estimate is proved using similar methods as in Section~3.1 of this paper. The second result is the very recent \cite{miller:mf} which describes the distortion spectrum of the derivative away from the tip. It relies on couplings of SLE and Gaussian free fields and so-called imaginary geometry techniques to estimate correlations. (The paper \cite{BS} precedes these works but computes the integral means spectrum in expectation rather than in the almost sure sense.)

The present paper uses similar ideas as in \cite{Law:mf,JL:tip} but we work directly with the forward flow. In particular, an important tool is Girsanov's theorem which is used to study the events on which moments concentrate. We believe the work here illustrates the method quite well; the proofs, including the two-point estimates, can be made comparatively short and transparent. The correlation estimates are different than those of \cite{Law:mf,JL:tip, miller:mf}, and are proved in a manner more similar to Beffara's argument for the dimension of the curves \cite{Bef}, taking as input the one-point analysis and making use of the domain Markov property. We stress that we work close to the boundary, which simplifies the argument.

A few auxiliary results that appear here have also appeared elsewhere. We mention in particular Lawler's very recent and closely related paper \cite{Law:boundary_mink}. It constructs the Minkowski content of the intersection of the curve with the real line. That paper and this one could be viewed as refining \cite{AS:dimension, AS:measure} in two different directions. Lawler's paper does not explicitly study multifractal properties, but the needed analysis is similar and we have used ideas from its discussion of correlation estimates. It would be interesting to try to strengthen our results in the direction of \cite{Law:boundary_mink} to construct a family of covariant measures on the boundary, supported on the sets we study in this paper. We also mention \cite{miller_wu} which studies a related question using the imaginary geometry technology. There is no significant overlap with this paper but it would be interesting to see if the methods of \cite{miller_wu} can be adapted to prove the correlation estimates of Section~\ref{thm:correlation}.

A few words about notation. We will write $f(x) \asymp g(x)$ when the estimates $c^{-1} g(x) \le f(x) \le c f(x)$ hold, $f(x) \lesssim g(x)$ when $f(x) \le c g(x)$ holds; in all cases the constants do not depend on $x,g,f$ but may depend on the various fixed parameters $a, \lambda$, etc. Constants may change from line to line, as may \emph{subpower functions}, i.e., $\psi$ such that for every $\ee > 0$, $\lim_{x \to \infty} x^{-\ee} \psi(x)=0$. Sub-exponential functions are defined similiarly, requiring $\lim_{x \to \infty} e^{-\ee x} \psi(x)=0$.
\subsection{Acknowledgements}

We thank MSRI and the organizers of the 2012 MSRI program on Random Spatial Processes, where this project was initiated. We also thank Greg Lawler and Wendelin Werner for helpful conversations. Alberts was supported by the Scott Russell Johnson Research Fellowship. Binder was partially supported by an NSERC Discovery grant. Viklund was supported by the Simons Foundation, NSF Grant DMS-1308476, and the Swedish Research Council.

\section{Schramm-Loewner evolution}
\subsection{SLE forward flow}
Throughout we consider the version of the Loewner equation defined by
\begin{align}\label{eqn:Loewner}
\bd_t g_t(z) = \frac{a}{g_t(z) - U_t}, \quad g_0(z) = z,
\end{align}
where $U : [0, \infty) \to \R$ is a continuous function. The SLE$_\kappa$ maps are obtained by setting $a = 2/\kappa$ and $U_t = B_t$, where $B$ is a one-dimensional standard Brownian motion with $B_0 = 0$ and filtration $\F_t$. We let $h_t$ be the shifted map
\begin{align*}
h_t(z) = g_t(z) - B_t,
\end{align*}
so that $h_t$ follows the stochastic differential equation
\begin{align}\label{eqn:h_SDE}
d h_t(z) = \frac{a}{h_t(z)} \, dt - dB_t.
\end{align}
For each $z \in \overline{\H}$ we define the stopping time $T_z$ to be the first time at which $z$ hits zero, i.e.
$T_z = \inf \{ t \geq 0 : h_t(z) = 0 \}$. From this family of stopping times we define the SLE hull $K_t$ by
$K_t = \{ z \in \overline{\H} : T_z \leq t \}$. One of the fundamental properties of the SLE hull is that it is generated by a continuous curve (almost surely). We let $\gamma : [0, \infty) \to \overline{\H}$ denote the corresponding SLE$_\kappa$ curve.

We will work almost exclusively with the curve near $\R$, and since the law of SLE is symmetric about the vertical axis it is sufficient for us to consider $x \in \R_+$. Differentiating \eqref{eqn:h_SDE} with respect to the spatial parameter and solving the resulting ODE yields the formula
\begin{align}\label{eqn:ht_prime_exp}
h_t'(x) = \exp \left \{ - \int_0^t \frac{a}{h_s(x)^2} \, ds \right \}.
\end{align}
Notice that $0 \leq h_t'(x) \leq 1$ for all $x \in \R_+$ and we see that $t \mapsto h_t'(x)$ is decreasing. Also observe that, from Ito's formula, we have the equation
\begin{align} \label{eqn:ht_exp}
h_t(x) = x \exp \left \{ \int_0^t \frac{a - 1/2}{h_s(x)^2} \, ds - \int_0^t \frac{dB_s}{h_s(x)} \right \}.
\end{align}
Clearly both $h_t$ and $h_t'$ are increasing functions of $x$ on $\R_+$.

In studying the distance from a point $x \in \R$ to the curve $\gamma$ it is useful to consider
\begin{align}\label{eqn:Delta_t}
\Delta_t(x) := \frac{h_t(x)}{h_t'(x)} = x \exp \left \{ \int_0^t \frac{2a-1/2}{h_s(x)^2} \, ds - \int_0^t \frac{dB_s}{h_s(x)} \right \},
\end{align}
the last equality following from a combination of \eqref{eqn:ht_prime_exp} and \eqref{eqn:ht_exp}. By the Koebe 1/4 Theorem
\begin{align}\label{eqn:x_Delta_relationship}
\dist(x, \gamma[0,t]) \leq 4 \Delta_t(x)
\end{align}
but there is no \emph{a priori} lower bound in terms of $\Delta$ only; see below.

We define the stopping times
\begin{align}\label{eqn:tau_defn}
\tau_s := \tau_s(x) = \inf \{ t \geq 0 : \dist(x, \gamma[0,t]) \leq e^{-s} \},
\end{align}
with the infimum of the empty set being infinite, as usual. Note these times are non-decreasing in $s$.

\subsection{One-point martingales}\label{sec:conformal_radius}
Suppose $\kappa \in (4,8)$. For each fixed value of $a=2/\kappa \in (1/4, 1/2)$ there is a one-parameter family of ``covariant'' martingales that will be extensively used in our analysis. (The martingales exist for other values of $a$, but we will only consider this case.) We parameterize this family with the variable $\nu$. Let $\nu_0$ and $\nu_c$ be the specific values
\begin{align}
\nu_0 = 4a - 1, \quad \nu_c = \frac{\nu_0}{2} = 2a - \frac{1}{2},
\end{align}
and then define the parameters $\beta$ and $\lambda$ as
\begin{align}\label{eqn:beta_lambda_def}
\beta = \frac{a}{\nu - \nu_c}, \quad \lambda = \frac{\nu}{2a}(\nu - 2\nu_c) = \frac{1}{a} \left( \frac{\nu^2}{2} - \nu_c \nu \right).
\end{align}
We will work only with the case $\nu > \nu_c$; the reason for this choice will be made clear later. Observe that $\beta$ is strictly positive while $\lambda$ is negative for $\nu \in (\nu_c, \nu_0)$ and positive for $\nu > \nu_0$. One can parameterize in other equivalent ways. The various parameters are related by the formulas:
\begin{align*}
0 < \beta < \infty:   \,\, \nu &= \frac{a}{\beta} + \nu_c, \quad \lambda = \frac{1}{2a} \left( \frac{a}{\beta} + \nu_c \right) \left( \frac{a}{\beta} - \nu_c \right) \\
-\frac{\nu_c^2}{2a} < \lambda < \infty: \, \, \nu &= \nu_c + \sqrt{\nu_c^2 + 2a \lambda}, \quad \beta = \frac{a}{\sqrt{\nu_c^2 + 2 a \lambda}}
\end{align*}
Note that $\beta = \nu'(\lambda)$, which will be used later on. For each $x > 0$ we see from \eqref{eqn:ht_prime_exp} and \eqref{eqn:Delta_t} that
\begin{align}\label{eqn:M_def}
M_t^{\nu}(x) := h_t'(x)^{\lambda} \Delta_{t}(x)^{-\nu} = h_t'(x)^{\lambda + \nu} h_t(x)^{-\nu}
\end{align}
is a local martingale on $0 \leq t < T_x$ with
\begin{align}\label{eqn:M_exp_form}
M_t^{\nu}(x) = x^{-\nu} \exp \left \{ \nu \int_0^t \frac{dB_s}{h_s(x)} - \frac{\nu^2}{2} \int_0^t \frac{ds}{h_s(x)^2} \right \}.
\end{align}
Equivalently we have the SDE form
\begin{align*}
\frac{dM_t^{\nu}(x)}{M_t^{\nu}(x)} = \frac{\nu}{h_t(x)} \, dB_t, \quad M_0^{\nu}(x) = x^{-\nu}.
\end{align*}

\begin{remark}\label{rm:u_time}
The representation \eqref{eqn:M_exp_form} suggests using the $x$-dependent random time change $u_x(t) := \int_0^t h_s^{-2} ds$. We will not make much use of this particular time change, but we note that when it is employed many formulas become simple and the ``correct'' behavior can often be read-off easily. Let us write $\hat{h}_u=h_{\hat{t}(u)}$, etc, for the time-changed processes. The process $\hat{M}^{\nu}_u$ is a clearly a martingale. Note that in this time-parameterization $\hat{h}'_u(x) = e^{-au}$. It is straightforward to check that $\hat{\Delta}_u(x) \to 0$ under the probability measure weighted by $\hat{M}^{\nu}_u(x)$, so long as $\nu > \nu_c$. Consequently, since $ \dist(x, \gamma[0,s]) \lesssim \Delta_s(x)$, for each $s < \infty$ we have $\tau_s < \infty$ almost surely under this weighted measure. A similar argument shows that $\tilde{t}(s) < \infty$ almost surely under the measure weighted by the time-changed martingale of Section~\ref{sect:weighted_measure}.
\end{remark}
The estimate \eqref{eqn:x_Delta_relationship} is only one-sided because $h_t(x)$ is larger than the actual distance from the image of $g_t(x)$ to the image set $g_t(\partial K_t)$. To gauge the true distance we need to keep track of the image of the rightmost point on the real line that belongs to the hull at time $t$, that is,
\begin{align}\label{eqn:Ot_def}
O_t := \lim_{x \downarrow \sup K_t \cap \R} h_t(x).
\end{align}
When the tip $\gamma(t)$ is away from $\R_+$ the $O_t$ process evolves as does any other point on the line, that is according to
\begin{align}\label{eqn:Ot_sde}
dO_t = \frac{a}{O_t} \, dt - dB_t
\end{align}
which is the same as \eqref{eqn:h_SDE}. The times at which $\gamma(t) \in \R_+$ correspond to times for which $O_t = 0$. In order to maintain scale invariance for the SLE hull these times must have Lebesgue measure zero. This implies that $O_t$ is a Bessel process with instantaneous reflection to the right at zero; such processes are well-defined in several equivalent ways. See \cite[Chapter 5]{RY:book} for details. We will only use that $O_t$ satisfies the SDE \eqref{eqn:Ot_sde} away from zero and is instantaneously reflecting to the right at zero. The latter implies that the generator of $O_t$ only acts on functions $f : \R_+ \to \R$ satisfying the Neumann condition $f'(0) = 0$.

\newcommand{\crad}{C}

\newcommand{\A}{A}
With $O_t$ at our disposal the Koebe-$1/4$ Theorem gives the two-sided bound
\begin{align}\label{eqn:Koebe_bound}
\frac14 \left( \frac{h_t(x) - O_t}{h_t'(x)}  \right) \leq \dist(x, \gamma[0,t]) \leq 4 \left( \frac{h_t(x) - O_t}{h_t'(x)} \right).
\end{align}
For $x \in H_t$ we let \begin{align} \crad_t(x) = \frac{h_t(x)-O_t}{h'_t(x)}. \label{eqn:Ct_defn} \end{align}This is (a constant times) the conformal radius of the domain $H_t$ reflected about the real axis, as seen from $x$. We will repeatedly use that $\crad_t(x) \asymp \dist(x, \gamma[0,t])$.

We also define a new process $\A_t = \A_t(x)$ by
\begin{align}\label{eqn:A_process}
\A_t(x) = 1 - \frac{O_t}{h_t(x)},
\end{align}
which is a diffusion on $[0,1]$, and we observe that $A_t(x) \Delta_t(x) = \crad_t(x)$. Equation \eqref{eqn:Ot_sde} combined with a simple computation shows that $A_t$ follows the SDE
\begin{align}\label{eqref:A_sde}
d \A_t = \A_t \left[ (1-a) - \frac{a}{1-\A_t} \right] \, \frac{dt}{h_t(x)^2} + \A_t \frac{dB_t}{h_t(x)}.
\end{align}
Observe that $\A_t$ is absorbing at $x = 0$ (since $A_t = 0$ only when $x$ is swallowed by the curve) and instantaneously reflecting at $x = 1$ (by the reflecting property of $O_t$). Using this SDE and $C_t = A_t \Delta_t$ it is straightforward to compute that
\begin{align*}
dC_t = -a C_t \frac{A_t}{1-A_t} \, \frac{dt}{h_t(x)^2},
\end{align*}
so by defining a process $\tilde{t}(s)$ as the solution to the equation
\begin{align*}
s = \int_0^{\tilde{t}(s)} \frac{A_r}{1 - A_r} \, \frac{dr}{h_r(x)^2}
\end{align*}
we have that $\tilde{C}_s := C_{\tilde{t}(s)} = e^{-as}$. We will write $\tilde{h}_s = h_{\tilde{t}(s)}$, $\tilde{M}_s = M_{\tilde{t}(s)}$, etc for the processes time-changed by $\tilde{t}(s)$. All time-changed processes are adapted to the filtration $\tilde{\F}_s = \F_{\tilde{t}(s)}$. This particular time change is often called the \textit{radial parameterization} and also appears in, e.g., \cite{Law:boundary_mink}. Of particular importance for the next section are the local martingales $M_t^{\nu}$, which in terms of the processes $C_t$ and $A_t$ can be written as
\[
M_t^{\nu}(x) = h'_t(x)^\lambda \crad_t(x)^{-\nu} A_t(x)^\nu.
\]
Under the radial parameterization this becomes
\[
\tilde{M}_s^{\nu}(x) = \tilde{h}_s'(x)^{\lambda} e^{\nu a s} \tilde{A}_s(x)^{\nu}.
\]
We remark that when $\lambda > 0$ this is a bounded process on every compact time interval, and therefore a martingale. When $\lambda < 0$ we can use, e.g., the stopping-times discussed in Remark~\ref{rm:u_time}, and take $T_n = \inf\{s \ge  0 : \tilde{h}_s'(x) \le e^{-an} \}$ as a localizing sequence.

\subsubsection{Weighted measure}\label{sect:weighted_measure}
For each $x > 0$ and $\nu > \nu_c$ we use the local martingales $\tilde{M}_s^{\nu}(x)$ to define a new probability measure $\PP^* = \PP^*_{x, \nu}$ by
\[
\PP^*\left( A \right) = \tilde{M}_0^{\nu}(x)^{-1} \E \left[ \tilde{M}_s^{\nu}(x) \operatorname{1}_A \right]
\]
for all $\tilde{\F}_s$-measurable events $A$. As usual this is understood in the sense of localization. This measure then extends to all of $\tilde{\F}_{\infty}$. Using Girsanov's theorem the equation for $\tilde{\A}_s$ under $\PP^*$ is seen to be
\begin{equation}\label{eq:tilde-a}
d \tilde{\A}_s = \left((1-2a + \nu)-(1-a+\nu) \tilde{A}_s \right) ds - \sqrt{\tilde{\A}_s(1-\tilde{\A}_s)} d\tilde{W}_s,
\end{equation}
where $\tilde{W}_s$ is a standard Brownian motion under $\PP^*$. This process has an explicit invariant density which can be determined as follows. Let
\[
\sigma(x)^2=x(1-x), \quad b(x) = (1-2a + \nu)-(1-a+\nu)x,
\]
so that the invariant density $\tilde{\pi} = \tilde{\pi}_{\nu}(x)$ solves the adjoint equation
\[
\frac{1}{2}\left[\sigma(x)^2 \tilde{\pi}(x) \right]'' - \left[b(x) \tilde{\pi}(x) \right]' = 0.
\]
This expands to
\[
x(1-x)\tilde{\pi}''(x) + 2\left( 2a - \nu + (\nu - a - 1) x \right) \tilde{\pi}'(x) + 2 \left( \nu - a \right) \tilde{\pi}(x)=0.
\]
The boundary behavior of $\tilde{A}$ implies that the boundary conditions are
\begin{align*}
\lim_{x \downarrow 0} \sigma^2(x) \tilde{\pi}(x) = 0, \quad \lim_{x \uparrow 1} b(x) \tilde{\pi}(x) - \frac12 (\sigma^2 \tilde{\pi})'(x) = 0.
\end{align*}
By finding the integrable solution to the equation, we conclude the following.
\begin{lemma}\label{lem:dec15.1}
Let $\nu > \nu_c$. The process $\tilde{\A}_s$ is positive recurrent under $\PP^*$ with invariant density
\begin{align*}
\tilde{\pi}_{\nu}(x) = \tilde{c}_*x^{2\nu+1-4a}(1-x)^{2a-1}, \quad \tilde{c}_*= \frac{\Gamma(2-2 a+2 \nu )}{ \Gamma(2 a) \Gamma(2-4 a+2 \nu ) }.
\end{align*}
Consequently, if $\tilde{X}_t$ follows the same SDE as $\tilde{A}_t$ but is started according to $\tilde{\pi}_\nu$, then for $t \ge 0$ and $y > 0$,
\[
\PP^*\left\{\tilde{X}_t \le y \right\} \lesssim y^{2\nu+2-4a}, \quad \E^*[\tilde{X}_t^{-\nu}] < \infty.
\]
\end{lemma}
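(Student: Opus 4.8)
The plan is to reduce the whole statement to the explicit stationary density, which one finds from the standard one-dimensional recipe and then verifies directly. Write $\sigma^2(x)=x(1-x)$ and $b(x)=(1-2a+\nu)-(1-a+\nu)x$ for the coefficients in \eqref{eq:tilde-a}. A partial fractions computation gives
\[
\frac{2b(y)}{\sigma^2(y)}=\frac{2(1-2a+\nu)}{y}-\frac{2a}{1-y},
\]
so the candidate invariant density is
\[
\tilde\pi_\nu(x)\ \propto\ \frac{1}{\sigma^2(x)}\exp\!\left(\int_{1/2}^{x}\frac{2b(y)}{\sigma^2(y)}\,dy\right)\ \propto\ x^{2\nu+1-4a}(1-x)^{2a-1}.
\]
I would then check by substitution that this solves the expanded adjoint ODE and meets the stated boundary conditions at $0$ and $1$, and read off the normalization from the Beta integral $\int_0^1 x^{2\nu+1-4a}(1-x)^{2a-1}\,dx=\Gamma(2\nu+2-4a)\,\Gamma(2a)/\Gamma(2\nu+2-2a)$, which yields $\tilde c_*$. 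Integrability on $(0,1)$ holds precisely because $\nu>\nu_c=2a-\tfrac12$ forces $2\nu+1-4a>-1$, and because $2a-1>-1$ for $a\in(1/4,1/2)$.

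Next I would establish positive recurrence via the Feller boundary classification. The scale density is $\mathfrak s'(x)\propto x^{-2(1-2a+\nu)}(1-x)^{-2a}$. Near $0$ it behaves like $x^{-2+4a-2\nu}$, whose exponent is $<-1$ exactly when $\nu>\nu_c$; hence $\int_{0^+}\mathfrak s'=\infty$ and $0$ is inaccessible under $\PP^*$ (the originally absorbing boundary of $\tilde A$ in \eqref{eqref:A_sde} becomes unreachable after the locally absolutely continuous reweighting by $\tilde M^\nu$, which near $\tilde A=0$ carries the factor $\tilde A^\nu$, as expected for this $h$-transform). Near $1$, after the substitution $u=1-x$ the process looks like a squared Bessel process of dimension $4a\in(1,2)$, so it reaches the boundary and is instantaneously reflecting there, consistent with the reflecting boundary of $\tilde A$; and the speed (= invariant) measure is finite there since $2a-1>-1$. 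A finite speed measure together with non-absorbing boundaries gives positive recurrence with invariant law $\tilde\pi_\nu$.

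Finally, since $\tilde X_t$ starts from $\tilde\pi_\nu$ it is stationary, so $\tilde X_t\sim\tilde\pi_\nu$ for every $t\ge0$ and both assertions become statements about the Beta density. For the tail bound take $y\le\tfrac12$; then $(1-x)^{2a-1}\le 2^{1-2a}$ on $[0,y]$ since $2a-1<0$, whence
\[
\PP^*\{\tilde X_t\le y\}\le \tilde c_*\,2^{1-2a}\int_0^y x^{2\nu+1-4a}\,dx\ \lesssim\ y^{2\nu+2-4a},
\]
and for $y>\tfrac12$ the bound is trivial since $2\nu+2-4a>0$. For the negative moment, $\E^*[\tilde X_t^{-\nu}]=\tilde c_*\int_0^1 x^{\nu+1-4a}(1-x)^{2a-1}\,dx$, which is finite because $\nu+1-4a>-1$ — equivalently $\nu>4a-2$, implied by $\nu>\nu_c$ since $a<3/4$ — and $2a-1>-1$ handles the endpoint $1$. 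The only delicate point is the boundary analysis underlying positive recurrence, in particular confirming that the reweighting keeps $1$ reflecting while rendering $0$ inaccessible; the rest is direct computation with the explicit density.
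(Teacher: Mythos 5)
Your proof is correct and takes essentially the same route as the paper: the paper writes down the stationary adjoint (Fokker--Planck) equation with the zero-flux boundary condition at $1$ and exhibits the integrable solution, which is exactly the normalized speed-measure density $\propto \sigma^{-2}\exp(\int 2b/\sigma^2)$ that you compute, and both arguments then reduce the tail bound and the negative moment to Beta-integral computations. Your added Feller boundary classification ($0$ inaccessible because the scale function diverges there precisely when $\nu>\nu_c$, $1$ regular reflecting with finite speed measure) simply makes explicit the positive-recurrence assertion that the paper leaves implicit.
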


Note that this invariant density also appear in \cite{Law:boundary_mink}, although in a disguised form. We remark that there is a similar invariant distribution for the process in the time parameterization under which the derivative decays deterministically.

\section{First-moment estimates}
In order to compute the Hausdorff dimension of $V_{\beta}$ we need good control on the moments of $h_{\tau_s}'$ and we need to identify the event on which a given moment concentrates.

For sufficiently well-behaved Loewner curves one often has $\Delta_{\tau_s} \asymp e^{-s}$. If this was true for SLE$_\kappa$ curves then some simple algebra plus the fact that $M_{\tau_s}^{\nu}$ comes from a martingale would give
\begin{align}\label{eqn:moment_intuition}
\E \left[ h_{\tau_s}'(x)^{\lambda} \1{\tau_s < \infty} \right] & = \E \left[ \Delta_{\tau_s}(x)^{\nu} M_{\tau_s}^{\nu}(x) \1{\tau_s < \infty} \right] \nonumber  = \E_{x, \nu}^* \left[ \Delta_{\tau_s}(x)^{\nu} \right] \asymp e^{-\nu s}.
\end{align}
From this, by multifractal formalism, we expect that the expectations concentrate on the event that $h'_{\tau_s}(x) \approx e^{-\beta s}$, where $\beta(\lambda) = \nu'(\lambda)$. The latter is made precise below, but \emph{a priori} bounds of the form $\Delta_{\tau_s} \asymp e^{-s}$ are impossible for SLE curves.

We start by giving the up-to-constants estimate on the moments of $\tilde{h}_{s}'(x)$.
These bounds (with estimates on the error terms) have appeared in  \cite{Law:boundary_mink} and are very similar (but easier) than the analogous estimates for the reverse SLE flow close to the tip \cite{JL:tip}. Because of this we will be rather brief.
\begin{proposition}\label{thm:moment_ub_1}
Let $\lambda > \lambda_c = -\nu_c^2/(2a)$. For all $x \geq 1$ we have,
\begin{align*}
\E \left[ \tilde{h}_{s}'(x)^{\lambda} \1{\tilde{t}(s) < \infty} \right] \asymp x^{-\nu} e^{-\nu a s},\end{align*}
where the implicit constant depends only on $a$ and $\lambda$.
\end{proposition}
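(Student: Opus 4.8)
The plan is to work in the radial parameterization and analyze the martingale $\tilde M_s^\nu(x) = \tilde h_s'(x)^\lambda e^{\nu a s}\tilde A_s(x)^\nu$, exploiting the identity $\tilde h_s'(x)^\lambda = e^{-\nu a s}\tilde A_s(x)^{-\nu}\tilde M_s^\nu(x)$ together with the change of measure $\PP^* = \PP^*_{x,\nu}$ defined via $\tilde M^\nu$. Since $\tilde M_0^\nu(x) = x^{-\nu}$ and (using the Koebe bound and Remark~\ref{rm:u_time}) $\tilde t(s) < \infty$ almost surely under $\PP^*$, we may write for $x \ge 1$,
\begin{align*}
\E\left[\tilde h_s'(x)^\lambda \1{\tilde t(s) < \infty}\right]
= e^{-\nu a s}\,\E\left[\tilde M_s^\nu(x)\,\tilde A_s(x)^{-\nu}\1{\tilde t(s)<\infty}\right]
= x^{-\nu} e^{-\nu a s}\,\E^*\left[\tilde A_s(x)^{-\nu}\right].
\end{align*}
So the whole statement reduces to showing $\E^*[\tilde A_s(x)^{-\nu}] \asymp 1$ uniformly in $s \ge 0$ and $x \ge 1$, with constants depending only on $a$ and $\lambda$ (equivalently $\nu$). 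The condition $\lambda > \lambda_c$ is exactly $\nu > \nu_c$, which is what makes the $\PP^*$-dynamics \eqref{eq:tilde-a} positive recurrent with the explicit invariant density $\tilde\pi_\nu$ of Lemma~\ref{lem:dec15.1}; note $2\nu + 1 - 4a > -1$ precisely when $\nu > \nu_c$, so $\tilde\pi_\nu$ is integrable near $0$ and $\int_0^1 x^{-\nu}\tilde\pi_\nu(x)\,dx < \infty$ since $2\nu + 1 - 4a - \nu = \nu + 1 - 4a > -1$ again by $\nu > \nu_c$. Thus the invariant expectation $\E^*_{\tilde\pi_\nu}[\tilde X^{-\nu}]$ is a finite positive constant, which will be the value the $\asymp$ is comparing against.

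For the upper bound, the cleanest route is to get a bound on $\E^*[\tilde A_s(x)^{-\nu}]$ that is uniform in the starting point $\tilde A_0(x) = A_0(x)$. Here is where $x \ge 1$ enters: since $h_0(x) = x$ and $O_0 = 0$, one has $A_0(x) = 1$ for every $x > 0$, so in fact the process always starts from the reflecting boundary $1$ — the restriction $x\ge 1$ is presumably only used elsewhere (via $h_t(x)\ge$ something) or to keep a clean statement, and the real content is a uniform-in-initial-condition moment bound. I would establish this by a comparison/coupling argument: the drift $b(y) = (1-2a+\nu) - (1-a+\nu)y$ is strongly mean-reverting toward the interior point $y_* = (1-2a+\nu)/(1-a+\nu) \in (0,1)$, and near $0$ the drift is bounded below by the positive constant $1-2a+\nu = \nu - \nu_c > 0$ while the diffusion coefficient $\sqrt{y(1-y)}$ degenerates; this is a Bessel-like entrance behavior at $0$ that prevents $\tilde A$ from spending too long near $0$. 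Concretely I would bound $\sup_{s\ge 0}\E^*_y[\tilde A_s^{-\nu}]$ by (i) finding a Lyapunov function $F$ with $F(y)\asymp y^{-\nu}$ near $0$ and $\mathcal{L}^* F \le -c F + C$ for the generator $\mathcal{L}^*$ of \eqref{eq:tilde-a} — the computation $\mathcal{L}^* y^{-\nu} = \frac{\nu(\nu+1)}{2}y^{1-\nu}(1-y)\cdot y^{-2}\cdot(\dots) - \nu y^{-\nu-1} b(y)$ should, for $\nu > \nu_c$, have leading term $-\nu(\nu-\nu_c)y^{-\nu-1}\cdot y = -\nu(\nu-\nu_c)y^{-\nu} + O(y^{1-\nu})$ near $0$, giving the needed negative drift — and then (ii) applying Dynkin / Grönwall to get $\E^*_y[\tilde A_s^{-\nu}] \le e^{-cs}F(y) + C/c$, which is bounded uniformly over $s$ once $F(y) = F(1) \asymp 1$. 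For the lower bound, I would simply use that starting from $1$, by time $1$ (say) the law of $\tilde A_1$ under $\PP^*$ has a density bounded below on a fixed compact subinterval of $(0,1)$ by a non-degenerate hypoellipticity/support argument, whence $\E^*[\tilde A_s^{-\nu}] \ge \E^*[\tilde A_s^{-\nu}\1{s\ge 1}] \gtrsim 1$ by the Markov property and the finiteness of the invariant moment (a mild ergodic/irreducibility argument gives $\E^*_y[\tilde A_s^{-\nu}]\ge c > 0$ for all $s \ge 1$ and all $y$).

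The main obstacle, and the part requiring the most care, is the uniform-in-$s$ (and in the degenerate starting behavior) control of the \emph{negative} moment $\E^*[\tilde A_s^{-\nu}]$: the diffusion \eqref{eq:tilde-a} degenerates at the very boundary $0$ near which $\tilde A^{-\nu}$ blows up, so one must quantify how the drift $\nu - \nu_c > 0$ pushes the process away from $0$ fast enough that the singular moment stays integrable for all time, not just in the stationary regime. The Lyapunov-function estimate sketched above is the natural tool, but verifying the drift condition $\mathcal{L}^* F \le -cF + C$ globally on $(0,1)$ (handling both the $0$ end, where $F\sim y^{-\nu}$, and the $1$ end, where the reflecting boundary condition $F'(1) = 0$ must be respected) is the real work; everything else — the algebraic identity reducing to $\E^*[\tilde A_s^{-\nu}]$, the integrability of $\tilde\pi_\nu$, and the lower bound — is comparatively routine. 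An alternative to the Lyapunov argument would be to couple $\tilde A_s$ with the stationary version $\tilde X_s$ of Lemma~\ref{lem:dec15.1} and bound $|\tilde A_s^{-\nu} - \tilde X_s^{-\nu}|$ via a contraction/Wasserstein estimate for this one-dimensional diffusion; I expect the Lyapunov route to be shorter in writing.
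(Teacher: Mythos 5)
Your reduction is exactly the paper's: write $\tilde{h}_s'(x)^\lambda = \tilde{M}_s^\nu(x)\,e^{-a\nu s}\tilde{A}_s(x)^{-\nu}$, change measure to $\PP^*$, and reduce everything to showing $\E^*[\tilde{A}_s^{-\nu}\mid \tilde{A}_0=1]\asymp 1$ uniformly in $s$ (and you are right that $A_0(x)=1$ for every $x>0$, so the starting point is always the reflecting boundary). The lower bound, however, needs none of the hypoellipticity/support machinery you invoke: $\tilde{A}_s\in[0,1]$ and $\nu>\nu_c>0$ give $\tilde{A}_s^{-\nu}\ge 1$ pointwise, which is all the paper uses.

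The genuine gap is in your upper bound. Your Lyapunov computation has a sign error, and the error matters on part of the stated range. With $\mathcal{L}^*f=\tfrac12 y(1-y)f''+b(y)f'$ and $f(y)=y^{-\nu}$ one gets
\[
\mathcal{L}^* y^{-\nu} \;=\; \nu\Bigl[\tfrac{\nu+1}{2}(1-y) - (1-2a+\nu) + (1-a+\nu)y\Bigr]y^{-\nu-1},
\]
and the bracket tends to $\nu_c-\nu/2$ as $y\downarrow 0$, not to $-(\nu-\nu_c)y$ as you claim. Hence the leading singular term is $\nu(\nu_c-\nu/2)\,y^{-\nu-1}$, which is \emph{positive} and blows up whenever $\nu<2\nu_c=\nu_0$, i.e.\ precisely when $\lambda\in(\lambda_c,0)$. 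On that subrange (which the proposition covers, and which is needed for the $\underline{V}_\beta$ upper bound) no inequality $\mathcal{L}^*F\le -cF+C$ can hold for $F=y^{-\nu}$, and replacing $\nu$ by a larger exponent $p$ only works for $p<2(\nu-\nu_c)$, which excludes $p=\nu$ exactly when $\lambda<0$. (For $\lambda>0$ your drift condition does hold near $0$, but you would still have to handle the local-time term at the reflecting boundary $y=1$, where $F'(1)=-\nu\neq 0$ produces a contribution of the wrong sign.) The fix is the coupling you relegate to a one-line "alternative," but in its one-sided monotone form rather than as a Wasserstein contraction: run the SDE \eqref{eq:tilde-a} from $\tilde{A}_0=1$ and from the invariant law $\tilde\pi_\nu$ with the same driving Brownian motion; by pathwise monotonicity of one-dimensional diffusions the copy started at $1$ stays above the stationary copy, so $\E^*[\tilde{A}_s^{-\nu}\mid\tilde{A}_0=1]\le\E^*[X^{-\nu}]<\infty$ for all $s$ by Lemma~\ref{lem:dec15.1}. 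That two-line argument is the paper's proof and covers the whole range $\nu>\nu_c$.
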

\begin{proof}
The result for $\lambda = 0$ is well-known; see \cite{AK:intersection_prob} for a proof. For other $\lambda$ let $\E^*$ refer to expectation with respect to $\PP^*_{x, \nu}$, the measure weighted by $\tilde{M}_s^{\nu}(x) = \tilde{h}'_s(x)^\lambda \tilde{\Delta}_s(x)^{-\nu} = \tilde{h}'_s(x)^\lambda e^{a\nu s} \tilde{A}_s(x)^\nu$. We have
\begin{align*}
\E[\tilde{h}'_s(x)^\lambda \1{\tilde{t}(s) < \infty} ] & = x^{-\nu} \E^*[\tilde{\Delta}_s(x)^{\nu}] = x^{-\nu} e^{-a \nu s} \E^*[\tilde{A}_s^{-\nu} \mid \tilde{A}_0 = 1].
\end{align*}
Note that since $\nu > \nu_c$ Lemma~\ref{lem:dec15.1} implies that $\E^*[X^{-\nu}] <\infty$, where $X$ is a random variable with the distribution of the invariant density of $\tilde{A}_s$. By a coupling argument (namely that $\tilde{A}$ started from the invariant distribution is always to the left of $\tilde{A}$ started from $1$) we see that $\E^*[\tilde{A}_s^{-\nu} \mid \tilde{A}_0 = 1] \le \E^*[X^{-\nu}]$ and this gives the upper bound. Since $\tilde{A} \in [0,1]$ and $\nu > 0$, $\E^*[\tilde{A}_s^{-\nu} \mid \tilde{A}_0 = 1] \ge 1$ gives the lower bound.
\end{proof}

Using again that $\crad_t(x) \asymp \dist(\gamma[0,t],x)$, with universal constants, there is a constant $c_1$ such that $\tilde{t}(s/a - c_1) \le \tau_s \le \tilde{t}(s/a+c_1)$. This, with Proposition~\ref{thm:moment_ub_1} and $t \mapsto h_t'(x)$ decreasing, implies the following.
\begin{corollary}\label{cor1}
Suppose $\lambda > 0$. For all $x \geq 1$,
\[
\E \left[ h_{\tau_s}'(x)^{\lambda} \1{\tau_s < \infty} \right] \asymp x^{-\nu} e^{-\nu s}.
\]
\end{corollary}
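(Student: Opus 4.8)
The plan is to deduce Corollary~\ref{cor1} directly from Proposition~\ref{thm:moment_ub_1} by converting from the radial time parameterization $\tilde{t}(s)$ to the hitting-time parameterization $\tau_s$, using the comparison between the conformal radius $\crad_t(x)$ and $\dist(x,\gamma[0,t])$ together with the monotonicity of $t \mapsto h_t'(x)$. The crucial geometric input, stated in the line preceding the corollary, is that $\crad_t(x) \asymp \dist(x,\gamma[0,t])$ with \emph{universal} constants (from the Koebe bound \eqref{eqn:Koebe_bound} and $\crad_t = A_t \Delta_t$, since $\dist \le 4\Delta$ and $\dist \ge \tfrac14 \crad_t$ always, while the reverse is the Koebe upper bound). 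Since $\tilde{C}_s = \crad_{\tilde{t}(s)}(x) = e^{-as}$ by the definition of the radial parameterization, the event $\{\crad_t(x) \le e^{-s}\}$ first occurs at $t = \tilde{t}(s/a)$, whereas $\tau_s$ is the first time $\dist(x,\gamma[0,t]) \le e^{-s}$. The two-sided comparison $c^{-1}\crad_t \le \dist(x,\gamma[0,t]) \le c\,\crad_t$ then forces $\tilde{t}\big((s-\log c)/a\big) \le \tau_s \le \tilde{t}\big((s+\log c)/a\big)$; absorbing $\log c / a$ into a constant $c_1$ gives $\tilde{t}(s/a - c_1) \le \tau_s \le \tilde{t}(s/a + c_1)$, as asserted.

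Next I would exploit that $t \mapsto h_t'(x)$ is decreasing (immediate from \eqref{eqn:ht_prime_exp}, since the integrand $a/h_s(x)^2$ is nonnegative) and that $\lambda > 0$, so $t \mapsto h_t'(x)^\lambda$ is also decreasing. Combined with the sandwich on $\tau_s$ this yields the pointwise bounds
\[
\tilde{h}'_{s/a + c_1}(x)^\lambda \,\1{\tilde{t}(s/a+c_1) < \infty} \;\le\; h_{\tau_s}'(x)^\lambda \,\1{\tau_s < \infty} \;\le\; \tilde{h}'_{s/a - c_1}(x)^\lambda \,\1{\tilde{t}(s/a - c_1) < \infty},
\]
where one should be a little careful with the indicators: on the upper side, $\{\tau_s < \infty\} \subseteq \{\tilde{t}(s/a-c_1) < \infty\}$ because $\tau_s \ge \tilde{t}(s/a - c_1)$, so the inequality of the corresponding products is valid; on the lower side, $\{\tilde{t}(s/a+c_1) < \infty\} \subseteq \{\tau_s < \infty\}$ since $\tau_s \le \tilde{t}(s/a + c_1)$. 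Taking expectations and applying Proposition~\ref{thm:moment_ub_1} with $s$ replaced by $s/a \pm c_1$ gives
\[
\E\!\left[ h_{\tau_s}'(x)^\lambda \1{\tau_s < \infty} \right] \asymp x^{-\nu} e^{-\nu a (s/a \pm c_1)} = x^{-\nu} e^{-\nu s} \cdot e^{\mp \nu a c_1},
\]
and since $e^{\mp \nu a c_1}$ is a constant depending only on $a$ and $\lambda$, the upper and lower estimates combine to give $\E[h_{\tau_s}'(x)^\lambda \1{\tau_s < \infty}] \asymp x^{-\nu} e^{-\nu s}$, which is the claim.

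The only genuine subtlety — and the step I would treat most carefully — is the hypothesis change from $\lambda > \lambda_c$ in Proposition~\ref{thm:moment_ub_1} to $\lambda > 0$ here: when $\lambda > 0$ the martingale $\tilde{M}_s^\nu$ is bounded on compacts and the weighted-measure machinery (and hence Proposition~\ref{thm:moment_ub_1}) applies cleanly, and moreover $\nu > \nu_0 = 4a-1$, so the invariant-density moment bound $\E^*[X^{-\nu}] < \infty$ from Lemma~\ref{lem:dec15.1} is in force; restricting to $\lambda > 0$ is exactly what makes the corollary immediate without any localization worries. I would also note that the restriction $x \ge 1$ is simply inherited from Proposition~\ref{thm:moment_ub_1} and that the implicit constant, being a product of the Koebe universal constant raised to the power $\nu$ and the constant from the proposition, depends only on $a$ and $\lambda$. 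Everything else is bookkeeping with monotone functions and nested events; there is no real obstacle beyond making sure the indicator inclusions point the right way on each side of the sandwich.
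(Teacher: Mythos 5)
Your proposal is correct and follows essentially the same route as the paper: the two-sided Koebe comparison $\crad_t(x)\asymp\dist(x,\gamma[0,t])$ gives the sandwich $\tilde{t}(s/a-c_1)\le\tau_s\le\tilde{t}(s/a+c_1)$, and then monotonicity of $t\mapsto h_t'(x)$ together with $\lambda>0$ transfers Proposition~\ref{thm:moment_ub_1} to the $\tau_s$ parameterization. Your extra care with the indicator inclusions and the remark on why $\lambda>0$ (rather than $\lambda>\lambda_c$) makes the argument clean are both consistent with what the paper leaves implicit.
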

\begin{remark}
The work in next section implies a version of Corollary~\ref{cor1} that also holds for $\lambda < 0$.
\end{remark}
\subsection{Concentration of measure}\label{sect:concentration}
We will now study the typical behavior of $\tilde{h}'_s(1)$ under $\PP^*$, the measure weighted by the local martingale with parameters $\nu > \nu_c$ and $x=1$. The ideas are similar to those in Section~7 of \cite{Law:mf}, see also the appendix of \cite{JL:tip}.

Recall the radial time-change from the previous section. We will work with the representation
\[
 \tilde{h}'_{s}(1) = \exp \left \{ -a \int_0^{s} \tilde{A}_u^{-1}(1-\tilde{A}_u) du \right \},
\]
where $\tilde{A}_s$ follows the SDE \eqref{eq:tilde-a}; we emphasize that we consider this under $\PP^*$ which depends on $\nu$.  We have seen that the diffusion $\tilde{A}_s$ has an invariant distribution $\tilde{\pi}(x)$ under $\PP^*$. Hence if we define \[\tilde{L}_s = -\frac{1}{a}\log \tilde{h}_s'(1)=\int_0^s\tilde{A}_u^{-1}(1-\tilde{A}_u) du\] then by an ergodic theorem the time-average converges $\PP^*$-almost surely to the space average: \[
\lim_{s \to \infty} s^{-1}\tilde{L}_s = \int_0^1 x^{-1}(1-x) \tilde{\pi}(x) dx = \beta, \quad \quad (\PP^*-\text{a.s}),\]
the last identity following from a direct computation. This is yet another way in which the proof produces the value $\beta$.
We need some additional information about the smaller order terms. We will show that, roughly speaking, as $s \to \infty$, with large $\PP^*$-probability,
\[
\tilde{L}_s = \beta s  +O(s^{1/2}).
\]
Using the definition of $\tilde{L}_s$ we can rewrite the $\PP$ local martingale $\tilde{M}_s^{\nu}(1)$ as
\[
\tilde{M}_s = e^{-a \lambda \tilde{L}_s}e^{a \nu(\lambda)s} \tilde{A}_s^{\nu(\lambda)}.
\]
Here $\nu(\lambda) = \nu_c + \sqrt{\nu_c^2+2a \lambda}$. Now let $|\delta|$ be small and define a process $\tilde{N}_s$ by
\[
\tilde{N}_s = e^{-a \delta \tilde{L}_s}e^{as[\nu(\lambda+\delta) - \nu(\lambda)]} \tilde{A}_s^{\nu(\lambda + \delta) - \nu(\lambda)} = \tilde{M}_s^{\nu(\lambda + \delta)}/\tilde{M}_s^{\nu(\lambda)}.
\]
Using the last identity a straightforward computation shows that $\tilde{N}_s$ is a local martingale under $\PP^*$. The idea of the next lemma is that when $\delta$ is small, since $\beta = \nu'(\lambda)$, we have $\nu(\lambda + \delta) - \nu(\lambda) = \delta \beta+ O(\delta^2)$. So we can write the local martingale approximately as $e^{-a\delta(\tilde{L}_s - \beta s)} \tilde{A}_s^{\delta \beta}$. We will take the expectation with $\delta = \pm \ee/\sqrt{s}$ and use the super martingale property. Information about the invariant distribution of $\tilde{A}$ then gives the estimate we want.
\begin{lemma} \label{dec12.1}\label{dec12.2}
Let $\nu > \nu_c$. There is a constant $c < \infty$ such that the following holds.
For $p > 0$ sufficiently small and $t \ge 1$,
\begin{equation}\label{dec7.1}
\E^*\left[\exp\left\{ p \frac{|\tilde{L}_t-\beta t|}{\sqrt{t}} \right\} \right] \le c.
\end{equation}
Moreover, for $p > 0$ sufficiently small and $1 \le s\le t$,
\begin{equation}\label{dec7.222}
\E^*\left[\exp\left\{ p \frac{|\tilde{L}_t-\tilde{L}_s-\beta(t-s)|}{\sqrt{t-s}} \right\} \right] \le c.
\end{equation}

\end{lemma}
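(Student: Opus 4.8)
The plan is to reduce the estimate to the two one-sided bounds $\E^*[\exp\{p(\tilde L_t-\beta t)/\sqrt t\}]\le c$ and $\E^*[\exp\{-p(\tilde L_t-\beta t)/\sqrt t\}]\le c$, since $\exp\{p|\tilde L_t-\beta t|/\sqrt t\}$ is dominated by their sum, and to extract each from the supermartingale property of the auxiliary process $\tilde N_s=\tilde M_s^{\nu(\lambda+\delta)}/\tilde M_s^{\nu(\lambda)}$ under $\PP^*$ at a scale-dependent choice $\delta=\pm\epsilon/\sqrt t$. Since $\tilde N_s$ is a nonnegative local martingale under $\PP^*$ it is a supermartingale, and $x=1$ gives $\tilde A_0=1$, hence $\tilde N_0=1$; so $\E^*[\tilde N_t]\le 1$. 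Writing $\rho(\delta):=\nu(\lambda+\delta)-\nu(\lambda)$ and Taylor expanding (legitimate since $\nu(\cdot)$ is smooth on $(\lambda_c,\infty)$ with $\nu'(\lambda)=\beta$) gives $\rho(\delta)=\beta\delta+r(\delta)$ with $|r(\delta)|\le C_0\delta^2$, $C_0=C_0(a,\lambda)$. Inserting the explicit form of $\tilde N_t$ into $\E^*[\tilde N_t]\le 1$, this becomes
\[
\E^*\!\left[e^{-a\delta(\tilde L_t-\beta t)}\,\tilde A_t^{\rho(\delta)}\right]\le e^{-at\,r(\delta)}\le e^{aC_0 t\delta^2}.
\]

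With $\delta=\pm\epsilon/\sqrt t$, $\epsilon$ small and $t\ge 1$, the quantity $|\delta|\le\epsilon$ is small and $t\delta^2=\epsilon^2$, so the right-hand side above is a constant. For the sign making $\rho(\delta)<0$ (namely $\delta=-\epsilon/\sqrt t$) the factor $\tilde A_t^{\rho(\delta)}\ge 1$ because $\tilde A_t\le 1$, so discarding it immediately yields $\E^*[e^{(a\epsilon/\sqrt t)(\tilde L_t-\beta t)}]\le e^{aC_0\epsilon^2}$, i.e.\ one of the two one-sided bounds with $p=a\epsilon$. The opposite sign $\delta=\epsilon/\sqrt t$, where $\rho(\delta)>0$ so that $\tilde A_t^{\rho(\delta)}\le 1$ points the wrong way, is the one genuinely delicate point. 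Here I would apply the displayed estimate at the doubled scale $\delta'=2\delta$ and use the factorization
\[
e^{-a\delta(\tilde L_t-\beta t)}=\bigl(e^{-a\delta'(\tilde L_t-\beta t)}\tilde A_t^{\rho(\delta')}\bigr)^{1/2}\,\tilde A_t^{-\rho(\delta')/2}
\]
together with Cauchy--Schwarz, which bounds the desired expectation by a constant times $\E^*[\tilde A_t^{-\rho(\delta')}]^{1/2}$. Since $\rho(\delta')=O(\epsilon)$ is small, in particular $0\le\rho(\delta')\le\nu$ for $\epsilon$ small uniformly in $t\ge 1$, and $\tilde A_t\le 1$, we have $\tilde A_t^{-\rho(\delta')}\le 1+\tilde A_t^{-\nu}$; and $\E^*[\tilde A_t^{-\nu}\mid\tilde A_0=1]\le\E^*[X^{-\nu}]<\infty$ uniformly in $t$ by Lemma~\ref{lem:dec15.1} together with the coupling from the proof of Proposition~\ref{thm:moment_ub_1} ($\tilde A$ from $1$ stays to the right of $\tilde A$ started from $\tilde\pi_\nu$). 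Combining the two one-sided bounds proves \eqref{dec7.1}.

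For the increment bound \eqref{dec7.222} I would condition on $\tilde{\F}_s$ and use the Markov property: given $\tilde{\F}_s$, the increment $\tilde L_t-\tilde L_s$ has the law of $\tilde L_{t-s}$ for $\tilde A$ restarted from $\tilde A_s\in[0,1]$, so the conditional expectation equals $\Psi(\tilde A_s,\,t-s)$, where $\Psi(y_0,r)$ is the left side of \eqref{dec7.1} over horizon $r$ but started from $y_0$. Rerunning the argument above from a general starting point is identical except that $\tilde N_0=y_0^{\rho(\delta)}$, which for the relevant negative-$\rho$ choices of $\delta$ contributes an extra factor $y_0^{-O(\epsilon)}$; thus $\Psi(y_0,r)\lesssim 1+y_0^{-\eta_0}$ for a fixed small $\eta_0>0$, uniformly in $r$ (for short horizons $r$ one moreover uses $\tilde L_t-\tilde L_s\ge 0$, making the lower-deviation term trivial). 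Taking expectations and bounding $\E^*[\tilde A_s^{-\eta_0}\mid\tilde A_0=1]$ uniformly in $s\ge 1$, again by Lemma~\ref{lem:dec15.1} and the same coupling, finishes the argument. The main obstacle throughout is obtaining these negative-moment estimates for $\tilde A$ uniformly in the time horizon, which is exactly what the explicit invariant density in Lemma~\ref{lem:dec15.1} provides.
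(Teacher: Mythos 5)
Your proof of the one-point bound \eqref{dec7.1} is correct and rests on the same engine as the paper's: the $\PP^*$-supermartingale $\tilde{N}_t=\tilde{M}_t^{\nu(\lambda+\delta)}/\tilde{M}_t^{\nu(\lambda)}$ evaluated at $\delta=\pm\ee/\sqrt{t}$, with the Taylor cancellation $\nu(\lambda+\delta)-\nu(\lambda)=\beta\delta+O(\delta^2)$. The easy sign is handled identically. Where you genuinely diverge is the difficult sign, where the factor $\tilde{A}_t^{\rho(\delta')}\le 1$ points the wrong way: you use Cauchy--Schwarz at the doubled parameter $\delta'=2\delta$ and absorb the error into $\E^*[\tilde{A}_t^{-\rho(\delta')}]^{1/2}\le\E^*[\tilde{A}_t^{-\nu}]^{1/2}<\infty$, controlled by Lemma~\ref{lem:dec15.1} and the monotone coupling from the proof of Proposition~\ref{thm:moment_ub_1}. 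The paper instead splits on $\{\tilde{A}_t\le y\}$ with the optimized threshold $y=e^{-a\ee\beta\sqrt{t}/q}$, using the small-ball bound $\PP^*(\tilde{A}_t\le y)\lesssim y^q$ together with the deterministic bound $\tilde{L}_t\ge 0$. Both are valid; yours trades the event-splitting for a negative-moment bound and is arguably cleaner for this half.

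For the increment bound \eqref{dec7.222}, however, your argument has a gap. You condition on $\tilde{\F}_s$, invoke the Markov property, and claim $\Psi(y_0,r)\lesssim 1+y_0^{-\eta_0}$ uniformly in $r$ for the diffusion restarted from $y_0=\tilde{A}_s$. For the upper-deviation half this is fine (the only new contribution is $\tilde{N}_0=y_0^{\rho(\delta)}$ with $\rho(\delta)<0$ small). But for the lower-deviation half your Cauchy--Schwarz step now produces $\E^*_{y_0}[\tilde{A}_r^{-\rho(\delta')}]^{1/2}$ for an \emph{arbitrary} starting point $y_0\in(0,1]$, and the only tools available --- Lemma~\ref{lem:dec15.1} and the coupling --- control negative moments for the process started from $1$ or from stationarity; the coupling gives $\tilde{A}_r^{(y_0)}\le\tilde{A}_r^{(1)}$, which bounds this quantity from \emph{below}, not above. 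A uniform-in-$r$ bound of the form $\E^*_{y_0}[\tilde{A}_r^{-q}]\lesssim 1+y_0^{-q}$ is plausible but is nowhere established and would need its own argument. The clean repair is to avoid restarting altogether: apply the conditional supermartingale inequality $\E^*[\tilde{N}_t\mid\tilde{\F}_s]\le\tilde{N}_s$ and take full expectations (as the paper does), so that the starting-point factor becomes $\E^*[\tilde{A}_s^{\rho(\delta')}]\le 1$ and your Cauchy--Schwarz correction becomes $\E^*[\tilde{A}_t^{-\rho(\delta')}]$ under the original start at $\tilde{A}_0=1$, which is exactly what Lemma~\ref{lem:dec15.1} and the coupling control. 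With that reorganization your method goes through for the increments as well.
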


\begin{proof}
We rewrite $\log \tilde{N}_t - \left(\nu(\lambda + \delta) - \nu(\lambda) \right) \log \tilde{A}_t$ as
\begin{align*}
-a \delta \left( \tilde{L}_t - \beta t \right) - a\delta \beta t + at\left(\nu(\lambda+\delta) - \nu(\delta)\right) & = -a \delta \left( \tilde{L}_t - \beta t \right) + at\left(\nu'(\lambda)\delta -\delta \beta + O(\delta^2)\right).
\end{align*}
Since $\nu'(\lambda) = \beta$ we have cancellation in the second term and so
\[
\log \tilde{N}_t  = -a \delta \left( \tilde{L}_t - \beta t \right) + O(t\delta^2) + \left(\beta \delta + O(\delta^2)\right) \log \tilde{A}_t.
\]
Choose $\delta = \pm \frac{\ee}{\sqrt{t}}$, where $\ee>0$ is sufficiently small so that $\tilde{N}_t$ is well-defined. We get
\[
\log \tilde{N}_t  = -a \frac{\pm \ee}{\sqrt{t}} \left( \tilde{L}_t - \beta t \right) + O(\ee) + \left(\beta \frac{\pm \ee}{\sqrt{t}} + O(\ee^2/t)\right) \log \tilde{A}_t.
\]
Exponentiating, the (super-) martingale property implies
\begin{equation}\label{dec5.0}
\E^*\left[\exp\left\{ \mp a \ee \frac{\tilde{L}_t-\beta t}{\sqrt{t}} \right\} \tilde{A}_t^{\pm \beta \ee \frac{1}{\sqrt{t}} + O(\ee^2/t)} \right] \le c.
\end{equation}
Since $\tilde{A}_t \in [0,1]$ and $\beta, \ee >0$, from this we read off directly that for $t \ge 1$,
\begin{equation}\label{dec5.00}
\E^*\left[\exp\left\{ a \ee \frac{\tilde{L}_t-\beta t}{\sqrt{t}} \right\} \right] \le c,
\end{equation}
for $\ee > 0 $ sufficiently small. To prove \eqref{dec7.1} it remains to check that the expectation with $\ee$ replaced by $-\ee$ is bounded by a constant. For this we start from \eqref{dec5.0} which in this case reads
\[
\E^*\left[\exp\left\{-a \ee \frac{\tilde{L}_t-\beta t}{\sqrt{t}} \right\} \tilde{A}_t^{\frac{\beta \ee }{\sqrt{t}} + O(\ee^2/t)} \right] \le c.
\]
Let $y > 0$. Then the last estimate implies
\begin{equation}\label{dec5.1}
\E^*\left[\exp\left\{-a \ee \frac{\tilde{L}_t-\beta t}{\sqrt{t}} \right\} \1{\tilde{A}_t > y}\right] \le  c y^{-2\frac{\beta \ee}{\sqrt{t}}}.
\end{equation}
Moreover,
\begin{equation}\label{dec5.2}
\E^*\left[\exp\left\{-a \ee \frac{\tilde{L}_t-\beta t}{\sqrt{t}} \right\} \1{\tilde{A}_t \le y}\right] \le e^{a\ee \beta \sqrt{t}} \PP^*(\tilde{A}_t \le y) \le e^{a\ee \beta \sqrt{t}}y^q, \quad q:=2\nu+2-4a>0.
\end{equation}
Here we first used that $\tilde{L}_t \ge 0$, and since $\tilde{A}_0=1$ the probability is bounded above by the probability of the same event but where the process is started according to the invariant distribution, see Lemma~\ref{lem:dec15.1}. We see that both \eqref{dec5.1} and \eqref{dec5.2} are bounded by a constant if we choose
\[
y = e^{-\frac{a\ee \beta}{q} \sqrt{t}}.
\]
Together with \eqref{dec5.00} this completes the proof of \eqref{dec7.1}.

We now prove \eqref{dec7.222}. The argument is similar so we will sketch the proof.
Write \[\tilde{\Gamma}_t = \tilde{L}_t - \beta t.\]
We again use the martingale $\tilde{N}_t$. Rearranging the supermartingale inequality
\[
\E^*[\tilde{N}_t \mid \tilde{\mathcal{F}}_s] \le  \tilde{N}_s
\]
and then using $\nu(\lambda + \delta)-\nu(\lambda)=\beta \delta + O(\delta^2)$ as before, we get
\[
\E^* \left[e^{-a\delta (\Gamma_t - \Gamma_s)}e^{O((t-s)\delta^2)} \tilde{A}_t^{\nu(\lambda+\delta)-\nu(\lambda)} \right] \le \E^*\left[\tilde{A}_s^{\nu(\lambda+\delta)-\nu(\lambda)} \right] \le c.
\]
The last estimate uses the invariant distribution and that $\delta$ is taken sufficiently small. We apply this with $\delta = \pm \ee /\sqrt{t-s}$ and assume $t-s \ge c_0$ for some constant $c_0$ (we can do this without loss of generality since if $t-s \le c_0$ the result holds by monotonicity). This gives
\[
\E^* \left[\exp\left\{ \mp a\ee \frac{\Gamma_t - \Gamma_s}{\sqrt{t-s}} \right\} \tilde{A}_t^{\pm \beta \ee \frac{1}{\sqrt{t-s}} + O(\ee^2/(t-s))} \right] \le c.
\]
Again the more difficult case is to bound
\[\E^* \left[\exp\left\{ - a\ee \frac{\Gamma_t - \Gamma_s}{\sqrt{t-s}} \right\} \tilde{A}_t^{\beta \ee \frac{1}{\sqrt{t-s}} + O(\ee^2/(t-s))} \right],
\]
but since \[\Gamma_t-\Gamma_s = \int_s^t\tilde{A}_u^{-1}(1-\tilde{A}_u) du - \beta(t-s) \ge -\beta(t-s),\] we can estimate as before and consider the event that \[\tilde{A}_t  \le e^{-\frac{a \ee \beta}{q}\sqrt{t-s}}\] and its complement. This completes the proof.
\end{proof}

We now get the following result. It is an analogue of Proposition~7.3 of \cite{Law:mf}, as is the proof.
\begin{proposition}\label{dec15.1}
Suppose $\nu > \nu_c$. There is a constant $c < \infty$ such that the following holds. Fix $t > 0$. For $u > 0$, let $\tilde{I}_t=\tilde{I}_t^u$ be the event that the following inequalities hold for all $0\le s \le t$:
\begin{enumerate}[(a)]
\item{$|\tilde{L}_s - \beta s| \le u \sqrt{s}\log(2+s)+c,$ }
\item{$|\tilde{L}_t - \tilde{L}_s - \beta(t-s)| \le u\sqrt{t-s} \log(2+t-s) + c.$}
\end{enumerate}
Then for any $\ee > 0$ there exists $u < \infty$ such that uniformly in $t\ge 0$,
\[
\PP^*\left\{ \tilde{I}_t^u\right\} \ge 1-\ee.
\]
\end{proposition}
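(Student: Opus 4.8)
The plan is to bound the probability of the complement of $\tilde{I}_t^u$ by a union bound over a dyadic collection of scales, using the exponential moment estimates of Lemma~\ref{dec12.1} together with Chebyshev's inequality. The two conditions (a) and (b) are symmetric in nature — (a) controls the increment from $0$ to $s$ and (b) the increment from $s$ to $t$ — so I will set up the argument for (a) and note that (b) follows identically from \eqref{dec7.222} in place of \eqref{dec7.1}. The key point is that although (a) must hold simultaneously for all $s \in [0,t]$, the continuity of $s \mapsto \tilde{L}_s$ (it is an integral of a locally bounded integrand, hence continuous) means it suffices to control $\tilde{L}_s - \beta s$ on a countable dense set of times and interpolate, and in fact a dyadic skeleton $s \in \{2^{-j} t : j \ge 0\} \cup \{t\}$ together with a modulus-of-continuity-type bound on each dyadic block is enough; the constant $c$ in (a) absorbs the interpolation error.

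The main steps, in order. First, fix the dyadic times $s_j = 2^{-j}t$ for $j = 0, 1, 2, \ldots$ For each fixed $j$, apply Lemma~\ref{dec12.1}: by Chebyshev, for $p>0$ small and any $\lambda_j > 0$,
\[
\PP^*\left\{ |\tilde{L}_{s_j} - \beta s_j| > \lambda_j \sqrt{s_j} \right\} \le c\, e^{-p \lambda_j}.
\]
Choose $\lambda_j = \frac{u}{c'} \log(2 + s_j)$ so that the right-hand side becomes $c(2+s_j)^{-p u / c'}$, and since the $s_j$ decrease geometrically the sum $\sum_j (2+s_j)^{-pu/c'}$ is bounded by a constant times $(2+t)^{-pu/c'}$ plus a convergent geometric-type tail, which can be made smaller than $\ee/2$ by taking $u$ large (uniformly in $t$). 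Second, handle the interpolation between consecutive dyadic times: on each block $[s_{j+1}, s_j]$ one controls $\sup_{s \in [s_{j+1},s_j]} |\tilde{L}_s - \tilde{L}_{s_{j+1}} - \beta(s - s_{j+1})|$. Because $\tilde{L}$ is monotone increasing (its integrand $\tilde{A}_u^{-1}(1-\tilde{A}_u) \ge 0$), the oscillation of $\tilde{L}_s - \beta s$ over $[s_{j+1}, s_j]$ is at most $(\tilde{L}_{s_j} - \tilde{L}_{s_{j+1}}) + \beta(s_j - s_{j+1})$, which is itself controlled once we know $\tilde{L}$ at the two endpoints — so this step costs nothing beyond the endpoint estimates, up to the additive constant $c$ in (a) and (b) and the fact that $\sqrt{s_j} \log(2+s_j) \asymp \sqrt{s} \log(2+s)$ for $s \in [s_{j+1}, s_j]$. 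Third, assemble: the event that (a) fails for some $s$ is contained in the union over $j$ of the dyadic endpoint failure events (with slightly adjusted constants), so its probability is $\le \ee/2$; similarly (b) fails with probability $\le \ee/2$, using \eqref{dec7.222} and the same dyadic decomposition of $[s, t]$ reading from $t$ downward. Hence $\PP^*\{\tilde{I}_t^u\} \ge 1 - \ee$, uniformly in $t$.

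The step I expect to be the main obstacle is making the interpolation between dyadic scales genuinely uniform in $t$ while keeping the logarithmic gauge $\sqrt{s}\log(2+s)$ sharp — one must be careful that near $s = 0$ the bound $u\sqrt{s}\log(2+s) + c$ degenerates to the constant $c$, so the very short-time behavior of $\tilde{L}_s$ has to be absorbed into $c$ rather than controlled by the exponential moment bound (which only gives useful decay for $s \ge 1$); this is exactly why the $+c$ is present in (a) and (b). A secondary technical point is that Lemma~\ref{dec12.1} as stated requires $t \ge 1$ (resp.\ $t - s \ge 1$), so the dyadic sum should be truncated at the largest $j$ with $s_j \ge 1$ and the remaining finitely many small scales handled by monotonicity and the additive constant; the number of such scales is $O(\log t)$, but since each contributes a bounded amount and the worst case is absorbed into $c$, no $t$-dependence leaks in. Once these bookkeeping issues are organized the argument is routine, closely paralleling the proof of Proposition~7.3 in \cite{Law:mf}.
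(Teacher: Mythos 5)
Your overall strategy---Chebyshev applied to the exponential moment bounds of Lemma~\ref{dec12.1}, a union bound over a discrete skeleton of times, and monotonicity of $\tilde{L}$ to interpolate between skeleton points---is exactly the paper's, and the treatment of (b) as symmetric to (a), as well as the remark that very short times must be absorbed into the additive constant $c$, are both correct. The gap is in your choice of skeleton. With the dyadic points $s_j = 2^{-j}t$, the interpolation step fails: for $s \in [s_{j+1}, s_j]$ monotonicity only gives
\[
|\tilde{L}_s - \beta s| \;\le\; \max\left(|\tilde{L}_{s_j}-\beta s_j|,\, |\tilde{L}_{s_{j+1}}-\beta s_{j+1}|\right) + \beta\,(s_j - s_{j+1}),
\]
and the deterministic drift term $\beta(s_j - s_{j+1}) = \beta\, 2^{-j-1}t$ is of order $t$ for small $j$. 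This is not ``controlled once we know $\tilde{L}$ at the two endpoints'': for $s$ near $t$ it is far larger than the allowed gauge $u\sqrt{s}\log(2+s)+c$ when $t$ is large, so condition (a) cannot be deduced on the block $[t/2,t]$ from the endpoint estimates alone. The same problem afflicts your dyadic decomposition of $[s,t]$ for condition (b).

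The fix is to use an arithmetic skeleton of unit spacing, $s=k$ for integers $1\le k\le t$, which is what the paper does. Then the interpolation error is $\beta\cdot 1$, a constant absorbed into $c$ (together with the bounded discrepancy between $\sqrt{k}\log(2+k)$ and $\sqrt{s}\log(2+s)$ on $[k,k+1]$), and the union bound becomes
\[
\sum_{k=1}^{\infty} c\, e^{-pu\sqrt{k}\log(2+k)},
\]
which converges, is independent of $t$, and is $o(1)$ as $u\to\infty$. Note that the larger number of skeleton points ($O(t)$ rather than $O(\log t)$) costs nothing because the summands decay super-polynomially in $k$; there was no need to thin the skeleton to dyadic scales, and doing so is what broke the argument.
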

\begin{proof}
This follows from Lemma~\ref{dec12.1} by splitting into subintervals of length $1$. Using the fact that $\tilde{L}_s$ is increasing in $s$ it in enough to consider the values for integer $s$. Using Chebyshev's inequality we get a bound for the probability of the complement, independently of $t$, of the form $\sum_{k=1}^\infty \exp\{-u p \sqrt{k}\log(2+k) \}$, where $p>0$ is as in Lemma~\ref{dec12.1}. This quantity is $o(1)$ in $u$. The proof of the second inequality is similar.
\end{proof}
\begin{remark}
Unless otherwise stated we will from now on write $\tilde{I}_t^u$ for both the indicator of the event of Proposition~\ref{dec15.1} and the event itself, with the particular choice of $c$ guaranteed by Proposition~\ref{dec15.1} but keeping $u$ as a free parameter.
\end{remark}
\begin{remark}\label{rem:11}
Recall that $\tilde{h}_s'(1)=e^{-a\tilde{L}_s}$. Hence on the event $\tilde{I}_t^u$ of Proposition~\ref{dec15.1} we have for $0 \le s \le t$,
\[ \psi_1(s)^{-1}e^{-a\beta s} \le \tilde{h}_s'(1) \le e^{-a\beta s}\psi_1(s),\]
and
\[ \psi_1(t-s)^{-1} e^{a\beta(t-s)}\le \frac{\tilde{h}_s'(1)}{\tilde{h}_t'(1)} \le e^{a\beta(t-s)}\psi_1(t-s). \]
Here $\psi_1(x) = \exp \{ au\sqrt{x} \log(2+x) \}$.
\end{remark}
The next lemma is the only place we need to assume $\lambda > 0$ (equivalently, $\nu > \nu_0)$ in this section.
\begin{lemma}\label{dec15.3}
Suppose $\lambda > 0$ and $u > 0$. There is a sub-exponential function $\psi$ such that the following holds. Let $0 \le s \le t$. Write $\tilde{I}_t=\tilde{I}_t^u$ for the indicator of the event of Proposition~\ref{dec15.1}. Then
\[
\tilde{h}_t'(1+e^{-as})^\lambda \tilde{I}_t \le c \psi(s) e^{-a\lambda \beta s}\tilde{I}_t .
\]
\end{lemma}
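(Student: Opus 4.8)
The plan is to exploit that, after raising to the positive power $\lambda$, an upper bound on $\tilde h_t'(1+e^{-as})$ is favored \emph{both} by the time-monotonicity of $t\mapsto h_t'$ and by the concentration estimate of Remark~\ref{rem:11}; this is precisely why the hypothesis $\lambda>0$ is needed, and it is what makes the argument short. First I would dispose of two trivial cases. If the point $1+e^{-as}$ has been swallowed by the hull by time $\tilde t(t)$, then $\tilde h_t'(1+e^{-as})=0$ and there is nothing to prove, so assume it has not. Similarly I may assume $s\ge c_2$ for a constant $c_2$ to be fixed below, since for $0\le s\le c_2$ we have $\tilde h_t'(1+e^{-as})^\lambda\le 1\le e^{a\lambda\beta c_2}e^{-a\lambda\beta s}$ because $\lambda,\beta>0$, and the extra factor is absorbed into $c$. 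So it remains to find a subpower function $\psi$ with $\tilde h_t'(1+e^{-as})^\lambda\le c\,\psi(s)\,e^{-a\lambda\beta s}$ on $\tilde I_t$ in the range $c_2\le s\le t$.

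The main step is to ``back up'' to the radial time at which the competitor point $1+e^{-as}$ sits at the correct scale relative to the point $1$. Since $\tilde t(\cdot)$ is increasing, $h'_\cdot(\cdot)$ is decreasing in the time variable, and $t\ge s-c_2$, we have
\[
\tilde h_t'(1+e^{-as})=h_{\tilde t(t)}'(1+e^{-as})\le h_{\tilde t(s-c_2)}'(1+e^{-as}).
\]
At radial time $s-c_2$ for the point $1$ one has $C_{\tilde t(s-c_2)}(1)=e^{-a(s-c_2)}=e^{ac_2}e^{-as}$, and by the Koebe estimates behind \eqref{eqn:Koebe_bound} the map $g_{\tilde t(s-c_2)}$ (whose derivative agrees with that of $h_{\tilde t(s-c_2)}$ and is real and positive on $\R_+\cap H_{\tilde t(s-c_2)}$) is univalent on a disk centered at $1$ of radius comparable to $C_{\tilde t(s-c_2)}(1)$. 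Choosing $c_2$ large enough (so that $e^{ac_2}$ exceeds a suitable absolute constant), the point $1+e^{-as}$ lies inside this disk at relative distance at most $1/4$ from the center; in particular it is automatically not swallowed at time $\tilde t(s-c_2)$, and the Koebe distortion theorem gives a universal constant $C$ with
\[
h_{\tilde t(s-c_2)}'(1+e^{-as})\le C\,h_{\tilde t(s-c_2)}'(1)=C\,\tilde h_{s-c_2}'(1).
\]

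It then remains to insert the one-point concentration bound and raise to the power $\lambda$. On $\tilde I_t$, since $0\le s-c_2\le t$, Remark~\ref{rem:11} (i.e.\ part (a) of Proposition~\ref{dec15.1}) gives $\tilde h_{s-c_2}'(1)=e^{-a\tilde L_{s-c_2}}\le \psi_1(s-c_2)\,e^{-a\beta(s-c_2)}$ with $\psi_1(x)=\exp\{au\sqrt{x}\log(2+x)\}$. Combining the three displayed inequalities and using $\lambda>0$,
\[
\tilde h_t'(1+e^{-as})^\lambda\le \bigl(C\,\psi_1(s-c_2)\,e^{-a\beta(s-c_2)}\bigr)^\lambda=C^\lambda e^{a\lambda\beta c_2}\,\psi_1(s-c_2)^\lambda\,e^{-a\lambda\beta s}.
\]
Since $\lambda$ is a fixed positive constant and $\psi_1$ is subpower (indeed sub-exponential), the function $\psi(s):=\psi_1(s)^\lambda$ is sub-exponential and dominates $\psi_1(s-c_2)^\lambda$ up to a multiplicative constant, so absorbing $C^\lambda e^{a\lambda\beta c_2}$ into $c$ finishes the estimate on $\tilde I_t$ for $s\ge c_2$; together with the two trivial cases above this proves the lemma.

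I expect the only genuine obstacle to be the geometric/distortion step: one must choose the buffer $c_2$ so that at the slightly earlier radial time $\tilde t(s-c_2)$ the point $1+e^{-as}$ lies comfortably inside a Koebe disk about $1$, which is why one needs the two-sided comparison $C_t(x)\asymp\dist(x,\gamma[0,t])$ rather than only the upper bound. Everything else — the time-monotonicity, the sign of $\lambda$, the swallowing and small-$s$ edge cases, and the bookkeeping of constants and subpower factors — is routine.
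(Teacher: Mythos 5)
Your proof is correct and follows essentially the same route as the paper's: back up to radial time $s-c$ using time-monotonicity of $h_t'$ and $\lambda>0$, compare $\tilde h_{s-c}'(1+e^{-as})$ to $\tilde h_{s-c}'(1)$ by Koebe distortion (the point lies well inside the univalence disk since $C_{\tilde t(s-c)}(1)=e^{-a(s-c)}\gg e^{-as}$ for $c$ large), and then apply the concentration bound of Proposition~\ref{dec15.1}(a). You simply fill in the edge cases and the choice of the buffer constant more explicitly than the paper does.
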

\begin{proof}
By the distortion theorem there is a constant $c > 0$ such that
\[
\tilde{h}_t'(1+e^{-as})^\lambda \tilde{I}_t \le \tilde{h}_{s-c}'(1+e^{-as})^\lambda \tilde{I}_t \lesssim \tilde{h}_{s-c}'(1)^\lambda \tilde{I}_t.
\]
The first inequality uses that $\lambda > 0$. Now on the other hand, on the event $\tilde{I}_t$ we have the estimate
\[
\tilde{h}_s'(1)^\lambda \tilde{I}_t\le e^{au \lambda \sqrt{s} \log(2+s)} e^{-a\lambda \beta s}.
\]
Combining these bounds gives the result.
\end{proof}
It remains to phrase these facts in terms of $h'_{\tau_s}(1)$. Recall that there is a constant $c_1$ such that \[\tilde{t}((s/a-c_1)\wedge 0) \le \tau_s \le \tilde{t}(s/a+c_1)\] for $s > 0$. With this choice of $c_1$, and with the parameter $u > 0$, define the random variable
\begin{equation}\label{jan15.2}
I_t^u = \E\left[ \tilde{I}_{t/a+c_1}^u \mid \mathcal{F}_{\tau_t}\right],
\end{equation}
where $\tilde{I}_{t/a+c_1}^u$ is the indicator of the event of Propsosition~\ref{dec15.1}. The reason for taking a conditional expectation here is that we want to have measurability with respect to $\mathcal{F}_{\tau_t}$.
\begin{proposition}\label{prop:event}Let $\nu > \nu_c, u>0$. There exist $0<c < \infty$ and a sub-exponential function $\psi$ such that the following holds. Let $I_t=I_t^u$ be as in \eqref{jan15.2}. Then for $t \ge 0$,
\begin{equation}\label{jan15.1}
c^{-1} e^{-\nu t} \le \E\left[h'_{\tau_t}(1)^\lambda I_t \right]  \le c e^{-\nu t}.
\end{equation}
Furthermore, the following estimates hold for all $0 \le s \le t$:
\begin{enumerate}[(a)]
\item \[\psi(s)^{-1}e^{-\beta s}I_t \le h_{\tau_s}'(1)I_t \le e^{-\beta s}\psi(s)I_t;\]
\item \[h_{\tau_t}'(1) \psi(t-s)^{-1} e^{\beta(t-s)}I_t \le h_{\tau_s}'(1) I_t\le h_{\tau_t}'(1) e^{\beta(t-s)}\psi(t-s)I_t;\]
\item{If in addition we assume $\nu > \nu_0$ (so that $\lambda > 0$) then it also holds that \[h'_{\tau_t}(1+e^{-s})^\lambda I_t \le \psi(s) e^{- \lambda \beta s} I_t.\]}
\end{enumerate}
\end{proposition}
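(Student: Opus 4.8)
The plan is to transfer the radial-time estimates of Remark~\ref{rem:11} and Lemma~\ref{dec15.3} to the $\tau$-parameterization using the comparison $\tilde t((s/a-c_1)\vee 0)\le\tau_s\le\tilde t(s/a+c_1)$ and monotonicity of $r\mapsto h_r'$, and to obtain the sharp moment bound \eqref{jan15.1} by a Girsanov change of measure. Throughout write $R=t/a+c_1$, and for $r\le t$ let $\rho_r$ be the radial time elapsed at real time $\tau_r$, so that $\rho_r\in[(r/a-c_1)\vee 0,\,r/a+c_1]$, the map $r\mapsto\rho_r$ is $\F_{\tau_t}$-adapted and nondecreasing, $h_{\tau_r}'(1)=\tilde h_{\rho_r}'(1)$, and $a\rho_r=r+O(1)$. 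Since $h'_{\tau_t}(1)$ is $\F_{\tau_t}$-measurable, the tower property reduces \eqref{jan15.1} to estimating $\E[h_{\tau_t}'(1)^\lambda\tilde I_R^u]$.

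For \eqref{jan15.1} I would first note that on $\tilde I_R^u$ one has $R-\rho_t\le 2c_1$, so part (b) of Proposition~\ref{dec15.1} applied at radial time $\rho_t$ bounds $|\tilde L_R-\tilde L_{\rho_t}|$ by a constant depending only on the fixed parameters, whence $h_{\tau_t}'(1)=\tilde h_{\rho_t}'(1)\asymp\tilde h_R'(1)$. Using the martingale $\tilde M_R^\nu(1)=\tilde h_R'(1)^\lambda e^{a\nu R}\tilde A_R^\nu$ and the defining relation of $\PP^*$ one gets
\[
\E\big[\tilde h_R'(1)^\lambda\,\tilde I_R^u\big]=e^{-a\nu R}\,\E^*\big[\tilde A_R^{-\nu}\,\tilde I_R^u\big].
\]
Since $\tilde A_R\in(0,1]$ and $\nu>0$, the right side is $\ge e^{-a\nu R}\PP^*(\tilde I_R^u)\gtrsim e^{-a\nu R}$ (uniformly in $R$, by Proposition~\ref{dec15.1}) and is $\le e^{-a\nu R}\E^*[\tilde A_R^{-\nu}\mid\tilde A_0=1]\le e^{-a\nu R}\E^*[X^{-\nu}]\lesssim e^{-a\nu R}$ by the coupling from the proof of Proposition~\ref{thm:moment_ub_1} together with Lemma~\ref{lem:dec15.1}. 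As $e^{-a\nu R}\asymp e^{-\nu t}$, this gives \eqref{jan15.1}; note the argument works for all $\nu>\nu_c$, so in particular it supplies the $\lambda<0$ version of Corollary~\ref{cor1}.

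Parts (a) and (b) are pathwise inequalities between $\F_{\tau_t}$-measurable quantities, so I would verify them on $\{I_t>0\}$, where $\PP(\tilde I_R^u\mid\F_{\tau_t})>0$; hence every $\F_{\tau_t}$-measurable event containing $\tilde I_R^u$ occurs almost surely on $\{I_t>0\}$. In particular the inequalities of Proposition~\ref{dec15.1}(a) hold at all radial times $r\le\rho_t$ (those $\tilde L_r$ being $\F_{\tau_t}$-measurable), so since $\rho_s\le\rho_t$ and $a\rho_s=s+O(1)$ for $s\le t$, Remark~\ref{rem:11} yields (a). For (b) I would use that $h_{\tau_s}'(1)/h_{\tau_t}'(1)$ is $\F_{\tau_t}$-measurable and estimate it along one continuation realizing $\tilde I_R^u$: writing it as $\tilde h_{\rho_s}'(1)/\tilde h_{\rho_t}'(1)\asymp\tilde h_{\rho_s}'(1)/\tilde h_R'(1)=e^{a(\tilde L_R-\tilde L_{\rho_s})}$ and applying Proposition~\ref{dec15.1}(b) at radial time $\rho_s$, whose error involves only $R-\rho_s\asymp(t-s)/a$, produces (b) with a sub-exponential factor in $t-s$ (the $\tilde L_R$ appearing along the continuation cancels in the ratio, so the final bound is continuation-independent).

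For (c), which is the only place I would need $\lambda>0$: fix $c'$ with $e^{c'}\ge2$; for $c'\le s\le t$, monotonicity gives $h_{\tau_t}'(1+e^{-s})\le h_{\tau_{s-c'}}'(1+e^{-s})$, and since $\dist(1,\gamma[0,\tau_{s-c'}])=e^{-(s-c')}\ge2e^{-s}$, reflecting the domain across $\R$ and applying the Koebe distortion theorem gives $h_{\tau_{s-c'}}'(1+e^{-s})\asymp h_{\tau_{s-c'}}'(1)$. Part (a) then bounds $h_{\tau_{s-c'}}'(1)\le\psi(s)e^{-\beta s}$ on $\{I_t>0\}$, and raising to the power $\lambda>0$ gives (c); the range $s<c'$ is trivial since $h_{\tau_t}'(1+e^{-s})^\lambda\le1\lesssim e^{-\lambda\beta s}$ there. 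I expect the steps requiring the most care to be the bookkeeping of which estimates are genuinely $\F_{\tau_t}$-measurable versus only valid along a continuation realizing $\tilde I_R^u$, and, in (b), invoking the increment form of Proposition~\ref{dec15.1} so that the correction depends on $t-s$ rather than on $t$; the boundary distortion estimate is the only geometric input, and otherwise everything reduces to the one-point SDE analysis already developed.
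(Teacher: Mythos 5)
Your proposal is correct and follows essentially the same route as the paper: the two-sided bound \eqref{jan15.1} via the change of measure to $\PP^*$, the coupling with the invariant density of $\tilde{A}$ for the upper bound and $\tilde{A}\le 1$ for the lower bound, and parts (a)--(c) by transferring Remark~\ref{rem:11} and (a version of) Lemma~\ref{dec15.3} to the $\tau$-parameterization through the sandwich $\tilde{t}(t/a-c_1)\le\tau_t\le\tilde{t}(t/a+c_1)$; your $\F_{\tau_t}$-measurability bookkeeping for (a)--(c) is, if anything, more explicit than the paper's. The one point to tighten is the claim that $\PP^*(\tilde{I}_R^u)\gtrsim 1$ uniformly in $R$ for the given $u$: Proposition~\ref{dec15.1} only supplies this for $u$ sufficiently large, and the paper's proof in fact fixes such a $u$ at exactly this step.
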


\begin{proof}
We prove \eqref{jan15.1} first. Let $u > 0$ be fixed for the moment. By Proposition~\ref{dec15.1},
\begin{align}\label{feb26.1}
\tilde{h}_{t/a-c_1}'(1) \tilde{I}_{t/a+c_1} \asymp \tilde{h}_{t/a+c_1}'(1) \tilde{I}_{t/a+c_1},
\end{align}
where the implicit constants depend on $u$. Since $\tilde{h}_{t/a+c_1}'(1) \le h'_{\tau_t}(1) \le \tilde{h}_{t/a-c_1}'(1) $ this implies that
\begin{align*}
\E \left[ h'_{\tau_t}(1)^\lambda I_t \right] = \E \left[ h'_{\tau_t}(1)^\lambda \tilde{I}_{t/a+c_1} \right]\asymp \E \left[\tilde{h}_{t/a+c_1}'(1)^\lambda \tilde{I}_{t/a+c_1} \right].
\end{align*}
Using Proposition~\ref{thm:moment_ub_1}, the upper bound in \eqref{jan15.1} follows immediately. For the lower bound we recall that $\tilde{A} \in [0,1]$ and $\nu > 0$, so
\begin{align*}
 e^{\nu t}\E\left[\tilde{h}'_{t/a+c_1}(1)^\lambda \tilde{I}_{t/a+c_1} \right] \ge c \E\left[\tilde{h}'_{t/a+c_1}(1)^\lambda e^{\nu(t+c_1)} \tilde{A}_{t/a+c_1}^\nu \tilde{I}_{t/a+c_1} \right] \ge c_2 \PP^*\left\{ \tilde{I}_{t/a+c_1}\right\},
\end{align*}
where the constants do not depend on $u$.
We now choose $u$  (somewhat arbitrarily) as the smallest number such that the last probability is at least $1/2$, independently of $t$. This completes the proof of \eqref{jan15.1}. Remark~\ref{rem:11} then shows that the estimates $(a)$ and $(b)$ hold, changing, if necessary, the sub-exponential function $\psi$.

Finally, if we assume that $\nu > \nu_0$, so that $\lambda > 0$, we can apply Lemma~\ref{dec15.3} to see that $(c)$ holds.
\end{proof}
\begin{remark}
It is clear from the proofs that the same statements, possibly allowing for a different subpower function and $u$, hold if we replace $1$ by any $y \in [1,2]$.
\end{remark}
\begin{remark}
In order to derive the harmonic measure spectrum discussed in the introduction it essentially suffices to study the decay of the process
\[
\tilde{h}_{s}(x) = \tilde{A}_s(x)^{-1} e^{-as} \tilde{h}'_{s}(x), \quad s \to \infty.
\]
We will not discuss details, but remark that the arguments in this section give the required control of the behavior of the non-trivial quantities on the right-hand side under the weighted measure $\PP^*$. \end{remark}
\section{Correlation estimate}
We will carry out the two-point estimate using the ``direct'' method. This is significantly easier in the boundary case since the SLE path can get close to two points in ``fewer'' ways than in the bulk case. The proof is similar to the one for the two-point estimate in Section 2 of \cite{Law:boundary_mink}, though the situation here is a bit more complicated. Throughout this section we assume that all parameters are chosen in a matching way, i.e., that $\beta = \beta(\lambda), \nu=\nu(\lambda)$, etc, but with the restriction that $\lambda > 0$. We write $d = 1+\lambda \beta -\nu$ which we assume is strictly between $0$ and $1$.

Given $u > 0$ and $x \in [0,1]$ we write $I_t(1+x) = I_t^u(1+x)$ as in Proposition~\ref{prop:event} with the point $1$ replaced by $1+x$. (We usually suppress the $u$ in order to keep notation lighter.) Throughout we fix $u$ and $\psi$ so that the conclusion of Proposition~\ref{prop:event} holds for these points.
\begin{theorem}\label{thm:correlation}
Let $\lambda>0$ be chosen such that $d > 0$. There exists $u>0$ and a subpower function $\psi$ such that for $n \in \mathbb{N}$ and $x \ge 0$
\[
\E \left[ h'_{\tau_n}(1)^\lambda \, h'_{\sigma_n}(1+x)^\lambda \, I_n(1) \, I_n(1+x) \right] \le  e^{-2n\nu} \left(x \vee e^{-n} \right)^{\lambda \beta -\nu}\psi \left(1/\left(x \vee e^{-n} \right) \right),
\]
where $\tau_n = \tau_n(1),\, \sigma_n=\tau_n(1+x)$.
\end{theorem}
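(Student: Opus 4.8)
The plan is to condition on $\tau = \tau_n(1) \wedge \sigma_n(1+x)$, the first time the curve comes within $e^{-n}$ of \emph{either} marked point, and to use the domain Markov property to split the correlation into a one-point estimate up to time $\tau$ times a conditional one-point estimate for the second point after time $\tau$. The key geometric input, which is why the boundary case is ``easier,'' is that at time $\tau$ the two points $1$ and $1+x$ are still order-$(x \vee e^{-n})$ apart in the image domain (up to subpower factors), so the remaining derivative to accumulate at the far point is controlled by how much harmonic measure / conformal radius is left between it and the tip. I would treat two regimes according to whether $x \le e^{-n}$ (the two balls of radius $e^{-n}$ essentially coincide, so the estimate should reduce to the one-point bound $e^{-n\nu}$ squared times the ``overlap'' factor) or $x > e^{-n}$ (genuine two-point interaction). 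By the up-to-constants equivalence $C_t \asymp \dist(\cdot,\gamma[0,t])$ and the radial reparmeterization of Section~\ref{sec:conformal_radius}, I can phrase everything in terms of $\tilde h'$ and the events $\tilde I$.

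The main steps, in order. First, reduce to $x \ge e^{-n}$: if $x < e^{-n}$ then $h'_{\sigma_n}(1+x) \asymp h'_{\tau_n}(1+x) \asymp h'_{\tau_n}(1)$ by the distortion theorem (the points are within $e^{-n}$ of each other and both within $e^{-n}$ of the curve), and $I_n(1)I_n(1+x) \le I_n(1)$, so the left side is bounded by $\E[h'_{\tau_n}(1)^{2\lambda} I_n(1)]$; this is a one-point moment with $\lambda$ replaced by $2\lambda$, hence $\asymp e^{-\nu(2\lambda) n}$ by Corollary~\ref{cor1}, and one checks $\nu(2\lambda) \ge 2\nu - (\lambda\beta-\nu)$ so that writing $(x \vee e^{-n})^{\lambda\beta-\nu} = e^{n(\nu-\lambda\beta)}$ gives exactly the claimed bound (with room to spare absorbed in $\psi$). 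Second, for $x \ge e^{-n}$, let $\rho = \tau_{m}(1) \wedge \tau_m(1+x)$ where $e^{-m} \approx x$, i.e. the first time the curve is within distance $\asymp x$ of one of the points. On $\{\rho < \infty\}$, by symmetry assume it is $1$ that gets approached first (the other case is handled identically after relabeling, at the cost of a factor $2$). Third, up to time $\rho$, estimate $\E[h'_\rho(1)^\lambda h'_\rho(1+x)^\lambda \,(\cdots)]$: since before time $\rho$ both derivatives are comparable (the curve is far from both points relative to their separation, so distortion gives $h'_\rho(1+x) \asymp h'_\rho(1)$ up to subpower), this is $\lesssim \E[h'_\rho(1)^{2\lambda} I_m(1) \,(\cdots)] \lesssim e^{-2m\nu}\psi(e^m) = x^{-2\nu}\psi(1/x)$ via Corollary~\ref{cor1} applied with exponent $2\lambda$ (note $\nu(2\lambda) > 2\nu$ since $\nu$ is convex in $\lambda$ wait --- more carefully $2\nu(2\lambda) \ge \ldots$); one must be slightly careful and instead run the one-point analysis of Proposition~\ref{prop:event} directly with the events $I$ to get the correct power $e^{-2m\nu}$ rather than $e^{-m\nu(2\lambda)}$. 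Fourth --- the crucial step --- condition on $\F_\rho$ and apply the domain Markov property: $h'_{\tau_n}(1) = h'_\rho(1) \cdot (\hat h'_{n}(w))$ where $\hat h$ is an independent SLE in $H_\rho$ and $w = g_\rho(1)$, and likewise for $1+x$; the conditional expectation of $h'(1)^\lambda h'(1+x)^\lambda I_n(1) I_n(1+x)$ given $\F_\rho$ then factors, after noticing that $I_n(1)$ is (up to conditioning) determined by the remaining $\approx n-m$ units of radial time at point $1$ and similarly at $1+x$. One uses part (c) of Proposition~\ref{prop:event}: the far point $1+x$ is at distance $\asymp x = e^{-m}$ from the tip $\gamma(\rho)$ in the $H_\rho$-picture, so $\hat h'_{\text{at }1+x}$ starting from a point at relative distance $e^{-m}$ from the new tip contributes, on the relevant event, at most $\psi \cdot e^{-\lambda\beta m} \cdot (\text{the remaining } e^{-\lambda\beta(n-m)} \text{ factor})$, giving the $x^{\lambda\beta}$ in the numerator. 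Combining: $e^{-2m\nu} \cdot e^{-\nu(n-m)} \cdot e^{-\lambda\beta m} e^{-(\nu-\lambda\beta)(n-m)}$-type bookkeeping — after collecting powers this produces $e^{-2n\nu} x^{\lambda\beta-\nu}$ with subpower corrections, matching the statement.

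The main obstacle is the fourth step: making the domain Markov decomposition genuinely rigorous at the stopping time $\rho$, with the correct interplay between the conditioning events $I_n$, the radial time change (which is point-dependent and does not commute cleanly with $g_\rho$), and the distortion estimates transporting ``distance to the curve at the far point'' through the map $g_\rho$. In particular one needs that, on the relevant event, the image point $g_\rho(1+x)$ sits at distance $\asymp x \cdot h'_\rho(1+x)^{-1}$ — no wait, at \emph{normalized} distance $\asymp 1$ after rescaling — from the new tip, uniformly, so that Proposition~\ref{prop:event}(c) applies with a genuinely subpower (not merely subexponential) error; upgrading the subexponential $\psi$ of Section~3 to a subpower $\psi$ in the statement requires that the ``$+c$'' and $\sqrt{s}\log s$ slack in Proposition~\ref{dec15.1} only costs subpower factors in the exponentials $e^{-\nu m}$, which is true because $m \asymp \log(1/x)$ and $\exp\{u\sqrt{\log(1/x)}\log\log(1/x)\}$ is subpower in $1/x$. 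Handling the case where the curve approaches $1+x$ \emph{before} $1$ is symmetric but one must make sure the roles are truly interchangeable given that the event $I_n(1)$ references the specific point; this is fine because the one-point results hold uniformly for points in $[1,2]$.
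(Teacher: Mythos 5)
Your overall bookkeeping heuristic (cost $x^{2\lambda\beta}\cdot x^{\nu-\lambda\beta}$ to reach scale $x$ of the pair, times $((e^{-n}/x)^{\nu})^{2}$ for the two independent final approaches) is the right one, and your remark that the subexponential errors of Section~3 become subpower after the substitution $s=\log(1/x)$ is correct. But two of your concrete steps contain genuine gaps. The first is your use of the $2\lambda$-moment. Since $\nu(\lambda)=\nu_c+\sqrt{\nu_c^2+2a\lambda}$ is strictly concave, $\nu(2\lambda)<\nu(\lambda)+\lambda\nu'(\lambda)=\nu+\lambda\beta$, so the bound $\E[h'_{\tau_n}(1)^{2\lambda}]\asymp e^{-\nu(2\lambda)n}$ from Corollary~\ref{cor1} is strictly \emph{weaker} (by an exponential factor, not a subpower one) than the target $e^{-(\nu+\lambda\beta)n}$ needed when $x\le e^{-n}$; the inequality you propose to ``check'' goes the wrong way. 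This is precisely why the indicators appear in the statement (see the remark following Theorem~\ref{thm:correlation}): the correct move is to use Proposition~\ref{prop:event}(a) on $I_n(1)$ to replace \emph{one} factor $h'^{\lambda}$ by $\psi(e^n)e^{-\lambda\beta n}$ and then apply the one-point bound only to the remaining single factor. The same defect infects your third step: the contribution up to scale $x$ is not $e^{-2m\nu}$ (which, since $\lambda\beta<\nu$ when $d<1$, is smaller than anything provable here) but rather $x^{\nu+\lambda\beta}$, again obtained by extracting $x^{\lambda\beta}$ from one factor via the event $I$.

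The more serious gap is your fourth step, where you assert that conditioning on $\F_\rho$ at the single stopping time $\rho$ makes the conditional expectation ``factor'' into independent contributions from the two points. It does not: the subsequent approaches of the single curve to $1$ and to $1+x$ are strongly dependent, and the curve may interleave them --- approaching $1+x$ to some intermediate scale $e^{-k}$ with $r\le k\le n$, then completing its approach to $1$, and only afterwards returning to $1+x$ through a channel whose harmonic measure has been reduced by the excursion toward $1$. A single conditioning time cannot resolve this, and no independence is available. The paper's proof replaces this step by a decomposition over events $A_n$, $B_n$ and $B_{k,n}$ indexed by that intermediate scale $k$, uses \emph{iterated} conditioning at time-ordered stopping times (first on $\gamma_{\tau_n}$ to control the final approach to $1+x$, then on $\gamma_{\sigma_k}$ to control the approach to $1$), and invokes the conformally invariant one-point estimate in terms of excursion measure (Lemma~\ref{1pt_2}) to quantify the cost $\bigl(e^{-n}e^{-k}/e^{-2r}\bigr)^{\nu}$ of the return; the geometric sum over $k$ then yields the stated bound. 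Without some version of this multi-scale decomposition and the excursion-measure input, your argument does not close.
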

\begin{remark}
If we did not include the $I$-variables in Theorem~\ref{thm:correlation}, then as $1+x \to 1+$ the expectation would concentrate on the event that $h_{\tau_n}'(1) \approx e^{-n \beta(2\lambda)}$, which does not give the ``correct'' bound except in the case $\lambda = 0$. In fact, in the latter case one can directly use a two-point martingale to prove the two-point estimate. There is a family of such martingales for all $\lambda$ we consider, but we have not found a way to use them to prove Theorem~\ref{thm:correlation}.
\end{remark}
For a stopping time $\tau$, we write $\gamma_\tau$ both for $\gamma[0,\tau]$ and for the filtration generated by the curve up to time $\tau$. The following lemma is the analogue of Proposition~2.1 of \cite{Law:boundary_mink}.
\begin{lemma} \label{1pt_1}
Let $\lambda > 0$. Suppose $\tau$ is a stopping-time with $\tau \le \tau_{n-1}$ almost surely. Then,
\begin{align*}
\E \left[ h_{\tau_{n}}'(1)^\lambda \mid  \gamma_\tau \right]
 & \lesssim h_\tau'(1)^{\lambda} \left( \frac{e^{-n} }{\dist(\gamma_\tau, 1)} \right)^{\nu}.
\end{align*}
\end{lemma}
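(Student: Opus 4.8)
The plan is to use the domain Markov property of SLE at the stopping time $\tau$ together with Corollary~\ref{cor1} (more precisely the version that also applies when $\lambda<0$, but here $\lambda>0$ so Corollary~\ref{cor1} suffices directly). First I would observe the chain rule for the conformal maps: for $\tau \le t$ one has $h_t'(1) = (h_t \circ h_\tau^{-1})'(h_\tau(1)) \cdot h_\tau'(1)$, so it suffices to control the derivative of the \emph{future} map evaluated at the image point $h_\tau(1)$. Conditionally on $\gamma_\tau$, the process $(h_{\tau+r} \circ h_\tau^{-1})_{r \ge 0}$ is, after recentering, a forward SLE$_\kappa$ flow started from the configuration $(\H \setminus \gamma_\tau$ mapped to $\H)$, with the marked point $1$ sent to $x^* := h_\tau(1)$. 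The event $\{\tau_n(1) < \infty\}$ and the value $h_{\tau_n}'(1)$ translate, under this conformal map, into the event that the new curve comes within a comparable distance of $x^*$ and the corresponding derivative at $x^*$; here one uses the Koebe estimate \eqref{eqn:Koebe_bound} and the fact that $\dist(\gamma_\tau,1) \asymp C_\tau(1) = h_\tau(1)\,(\text{stuff})$ to see that ``within $e^{-n}$ of $1$ in the original picture'' corresponds to ``within roughly $e^{-n}/(h_\tau'(1))$ of $x^*$ after the map,'' i.e.\ to a scale parameter $s^*$ with $e^{-s^*} \asymp e^{-n} h_\tau'(1)^{-1} \cdot x^* / (\dots)$. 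The bookkeeping here is where one must be careful: one wants to end up comparing to $e^{-n}/\dist(\gamma_\tau,1)$, which by the Koebe bound is comparable to $e^{-n} h_\tau'(1)/(h_\tau(1)-O_\tau) \lesssim e^{-n} h_\tau'(1)/h_\tau(1)$ after absorbing $A_\tau \le 1$.

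Concretely, the key steps are: (i) condition on $\gamma_\tau$ and apply the domain Markov property to replace $h_{\tau_n}'(1)$ by $h_\tau'(1)$ times the derivative of a fresh SLE flow at the marked point $x^* = h_\tau(1)$; (ii) use Koebe/distortion to identify the relevant hitting scale and note that since $\tau \le \tau_{n-1}$ the target scale is still macroscopically far, so the fresh flow genuinely has to travel down to a small scale $\asymp e^{-n}/\dist(\gamma_\tau,1)$; (iii) apply Corollary~\ref{cor1} (scaled appropriately in $x^*$ — here one rescales so the marked point sits at distance $1$ from the relevant boundary data, producing the factor $x^{-\nu}$ which becomes $(\dist(\gamma_\tau,1)/e^{-n})^{-\nu}\cdot(\text{correction})$) to bound the conditional expectation of that derivative's $\lambda$-th moment by a constant times $(e^{-n}/\dist(\gamma_\tau,1))^{\nu}$; (iv) multiply back the $h_\tau'(1)^\lambda$ from the chain rule. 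Scale invariance of SLE under $z \mapsto rz$ (which rescales $h_t'$ by nothing and distances by $r$) is what lets us reduce to the ``$x \ge 1$'' hypothesis in Corollary~\ref{cor1}.

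The main obstacle I expect is step (ii)–(iii): making precise that, after the conformal map $h_\tau$, the event defining $\tau_n(1)$ and the quantity $h_{\tau_n}'(1)$ really do correspond — up to universal multiplicative constants absorbed into $\lesssim$ — to a hitting-at-scale-$e^{-n}/\dist(\gamma_\tau,1)$ event for the image flow, so that Corollary~\ref{cor1} applies with the stated power $\nu$. The subtlety is that $\dist(x,\gamma[0,t])$ is only comparable to $C_t(x) = (h_t(x)-O_t)/h_t'(x)$, and the $O_t$-term (equivalently the $A_t$ factor) has to be controlled: since $A_\tau \in [0,1]$ one only gets $\dist(\gamma_\tau,1) \lesssim h_\tau(1)/h_\tau'(1)$, which is the right direction for an upper bound on the expectation, but one must check this inequality is being used with the correct orientation at each step. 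One also needs that conditioning on $\gamma_\tau$ rather than on $\F_\tau$ is harmless — it is, because $h_\tau$, $h_\tau'$, $O_\tau$ and hence $\dist(\gamma_\tau,1)$ are all $\gamma_\tau$-measurable, and the future flow depends on $\F_\tau$ only through $\gamma_\tau$ by the Markov property. Once the scale identification is in place, the rest is a direct invocation of Corollary~\ref{cor1} plus the chain rule, and the $(e^{-n}/\dist(\gamma_\tau,1))^\nu$ form falls out immediately.
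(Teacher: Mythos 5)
Your argument is essentially the paper's proof: chain rule $h'_{\tau_n}(1) = h'_\tau(1)\,h'_{\tau,\tau_n}(h_\tau(1))$, domain Markov at $\tau$, the distortion estimate $\operatorname{diam} h_\tau(B) \asymp e^{-n}h'_\tau(1)$ for the semi-disc $B$ of radius $e^{-n}$ about $1$ (valid because $\tau \le \tau_{n-1}$), the scaled one-point estimate of Corollary~\ref{cor1} giving $\bigl(e^{-n}h'_\tau(1)/h_\tau(1)\bigr)^\nu$, and finally $\dist(\gamma_\tau,1) \asymp C_\tau \le h_\tau(1)/h'_\tau(1)$. Two slips in your intermediate bookkeeping should be fixed — the image scale is $e^{-n}h'_\tau(1)$, not $e^{-n}/h'_\tau(1)$, and $e^{-n}h'_\tau(1)/(h_\tau(1)-O_\tau) \ge e^{-n}h'_\tau(1)/h_\tau(1)$ rather than $\lesssim$ — but your closing paragraph states the correct orientation of the key inequality, so the argument goes through as in the paper.
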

\begin{proof}
Let $B \subset \overline \H$ be the semi-disc of radius $e^{-n}$ about $1$. For $s \le t$ write $h_{s,t}(w) = h_t \circ h_s^{-1}(w)$ so that
\[
h_t(z) = h_{t,s} \circ h_s(z).\]
Then by the chain rule $h'_{\tau_n}(1) = h_\tau'(1) h_{\tau,\tau_n}'(h_\tau(1))$. Since $\tau \le \tau_{n-1}$ distortion estimates imply that
\[
\text{diam}( h_\tau(B) ) \asymp e^{-n} h'_\tau(1).
\]
Using the definition \eqref{eqn:Ct_defn} of $C_t$, we have
\[
\dist(\gamma_\tau, 1) \asymp C_\tau \le \frac{h_\tau(1)}{h'_\tau(1)}.
\]
Hence, as $\lambda, \nu > 0$, by the domain Markov property, scaling, and Corollary~\ref{cor1},
\[
\E \left[ h_{\tau,\tau_n}'(h_\tau(1))^\lambda \1{\tau_n < \infty}\mid \gamma_\tau \right] \lesssim \left( \frac{e^{-n} h'_\tau(1) }{h_\tau(1)} \right)^\nu \lesssim \left( \frac{e^{-n}}{\dist(\gamma_\tau, 1)} \right)^\nu.
\]
\end{proof}
It is useful to have a version of the one-point estimate expressed in terms of a conformally invariant quantity. We will use \textit{excursion measure}, but one could equally well use the more standard extremal length. If $D$ is a domain with analytic boundary and $A,B \subset \partial D$, then excursion measure is defined by  \[\mathcal{E}_D(A,B) = \int_A \partial_n \omega_D(\zeta, B) |d\zeta| =  \int_B \partial_n \omega_D(A, \zeta) |d\zeta|,\]
where $\omega$ is harmonic measure and $\partial_n$ is the normal derivative in the inward-pointing direction. It is easy to see that this is a conformal invariant and so we may define excursion measure in rough domains by conformal invariance. If $\eta$ is a crosscut of $\mathbb{H}$ whose endpoints are both positive, then it is well-known that
\[
\mathcal{E}_{\mathbb{H} \setminus \eta}(\mathbb{R}_-,\eta) \wedge 1 \asymp \frac{\operatorname{diam} \eta}{\dist(\eta, 0)} \wedge 1.
\]
\begin{lemma} \label{1pt_2}
Let $\eta$ be a crosscut separating $x > 0$ from $\infty$, with $0$ and $x$ in different components of $\H \setminus \eta$.  Let $\tau_\eta = \inf \{t \ge 0: \gamma[0,t] \cap \eta \neq \emptyset\}$ be the hitting time of $\eta$. If $\lambda > 0$, then
\[
\E \left[ h_{\tau_\eta}'(x)^\lambda \right] \lesssim \mathcal{E}_{\mathbb{H} \setminus \eta}\left(\mathbb{R}_-,\eta \right)^\nu.
\]
\end{lemma}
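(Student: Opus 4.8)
The plan is to deduce this from the one-point moment bound of Corollary~\ref{cor1} by comparing the crosscut hitting time $\tau_\eta$ with an ordinary collision time $\tau_s(x)$. Write $\mathcal{E}:=\mathcal{E}_{\H\setminus\eta}(\R_-,\eta)$. Since $\lambda>0$ gives $h'_{\tau_\eta}(x)^\lambda\le1$, and since $\lambda>0$ forces $\nu>\nu_0>0$, it will suffice to treat the case $\mathcal{E}<c_0$ for a small fixed constant $c_0$: when $\mathcal{E}\ge c_0$ one has $\mathcal{E}^\nu\gtrsim1\ge\E[h'_{\tau_\eta}(x)^\lambda]$. This reduction also forces $\dist(\eta,0)>0$, so the endpoints $a<b$ of $\eta$ are both positive; since $0$ lies in the unbounded component of $\H\setminus\eta$ one then has $0<a<x<b$, with the bounded component (the one containing $x$) being the region enclosed by $\eta$ and $[a,b]$.

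I would first record the (elementary) geometry. By the triangle inequality $b-a\le\operatorname{diam}\eta$, so $|x-w|\le2\operatorname{diam}\eta$ for every $w\in\overline\eta$. Combining $\dist(\eta,0)\le a<x<b\le a+\operatorname{diam}\eta$, the bound $a\le\dist(\eta,0)+\operatorname{diam}\eta$, and the comparison $\mathcal{E}\wedge1\asymp(\operatorname{diam}\eta/\dist(\eta,0))\wedge1$ recalled before the statement, one gets, for $c_0$ small, $x\asymp\dist(\eta,0)\asymp\operatorname{diam}\eta/\mathcal{E}$.

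The key step is then the stopping-time comparison. Put $e^{-s_0}:=2\operatorname{diam}\eta$, and adopt the convention $h'_t(x)=0$ for $t\ge T_x$ (consistent with $h'_t(x)\to0$ as $t\uparrow T_x$, which follows from \eqref{eqn:ht_prime_exp}); since $x$ is swallowed almost surely, $h'_{\tau_\eta}(x)\neq0$ only on $\{\tau_\eta<T_x\}$. On that event $\gamma(\tau_\eta)\in\overline\eta$, so $\dist(x,\gamma[0,\tau_\eta])\le|x-\gamma(\tau_\eta)|\le e^{-s_0}$, whence $\tau_{s_0}(x)\le\tau_\eta<T_x$; since $t\mapsto h'_t(x)$ is non-increasing this gives $h'_{\tau_\eta}(x)\le h'_{\tau_{s_0}(x)}(x)$ there. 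Therefore
\[
\E\left[h'_{\tau_\eta}(x)^\lambda\right]=\E\left[h'_{\tau_\eta}(x)^\lambda\1{\tau_\eta<T_x}\right]\le\E\left[h'_{\tau_{s_0}(x)}(x)^\lambda\1{\tau_{s_0}(x)<\infty}\right].
\]
By scale invariance of SLE I would then rescale space by $1/x$, sending the marked point to $1$; the effective scale parameter $s_0+\log x\asymp\log(1/\mathcal{E})$ is nonnegative for $\mathcal{E}$ small, so Corollary~\ref{cor1} applies and bounds the last expectation by a constant times $(e^{-s_0}/x)^\nu\asymp\mathcal{E}^\nu$, which is the claim.

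The probabilistic content is thus entirely carried by Corollary~\ref{cor1}; neither the domain Markov property nor Girsanov's theorem enters here. The only points I expect to require care are the geometric bookkeeping of the second paragraph — in particular extracting $x\asymp\operatorname{diam}\eta/\mathcal{E}$ from the excursion-measure/diameter comparison uniformly in $\eta$ — together with the convention at $T_x$ and the use of the scaled form of Corollary~\ref{cor1} for a point not assumed a priori to satisfy $x\ge1$.
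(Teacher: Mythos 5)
Your proof is correct and is essentially the paper's own argument: reduce to the case where the excursion measure is small (so $\operatorname{diam}\eta \lesssim x$), observe that hitting $\eta$ forces the curve within distance $\asymp \operatorname{diam}\eta$ of $x$ so that $\tau_{s_0}(x)\le\tau_\eta$, use monotonicity of $t\mapsto h'_t(x)$ together with $\lambda>0$, and conclude with the one-point estimate and the comparison $\mathcal{E}\wedge 1\asymp(\operatorname{diam}\eta/\dist(\eta,0))\wedge 1$. The extra care you take with the convention at $T_x$ and with rescaling so that Corollary~\ref{cor1} applies to points not assumed to satisfy $x\ge 1$ only makes explicit what the paper leaves implicit.
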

\begin{proof}
Set $\delta : = \operatorname{diam} \eta$. We can without loss of generality assume $\delta < x$, since $\mathcal{E}_{\mathbb{H}}\left(\mathbb{R}_-,\eta \right)$ is bigger than a universal constant in the other case and $h_{\tau_\eta}'(x)^\lambda \le 1$. The half-circle of radius $\delta$ around $x$ separates $\eta$ from $\infty$ in $\mathbb{H}$, so \[\tau:=\tau_{-\log \delta}(x) \le \tau_\eta(x).\] Moreover, $t \mapsto h_t'(x)$ is decreasing, so using that $\lambda > 0$,
\[
h_{\tau_\eta}'(x)^\lambda \le h_{\tau}'(x)^\lambda.
\]
From the one-point estimate we then get
\[
\E \left[ h_{\tau}'(1)^\lambda \right] \lesssim \left(\frac{\delta}{x} \right)^{\nu} \lesssim \left( \frac{\operatorname{diam} \eta}{\dist(\eta, 0)} \right)^\nu \wedge 1 \lesssim  \mathcal{E}_{\mathbb{H} \setminus \eta}(\mathbb{R}_-,\eta)^\nu \wedge 1 .
\]
\end{proof}
We will apply the last lemma in the following setting. Suppose $\tau < \tau_n$ is a stopping time such that $B=B_{e^{-n}}(1) \cap \mathbb{H}$ is inside $H_\tau$. Write $\eta = h_{\tau}(\partial B)$. This is a crosscut of $\mathbb{H}$. By the domain Markov property of SLE we can estimate
\[
\E \left[h'_{\tau_n}(1)^\lambda \mid \gamma_\tau \right] = h_{\tau}'(1)^\lambda \E \left[h_{\tau, \tau_n}'(h_\tau(1))^\lambda \mid \gamma_{\tau} \right] \lesssim h_{\tau}'(1)^\lambda \mathcal{E}_{H_\tau \setminus B}\left[ g_\tau^{-1}\left(\mathbb{R}_- \right), B \right]^\nu .\]
We now turn to the proof of Theorem \ref{thm:correlation}, which will make heavy use of the last two lemmas, Lemma \ref{dec15.3}, and Proposition \ref{prop:event}. Note that in the latter two results $\psi$ was a sub-exponential function, whereas in Theorem \ref{thm:correlation} we have changed the notation to make it a subpower function.

\begin{proof}[Proof of Theorem~\ref{thm:correlation}]
Throughout we will write $I'_n:=I_n(1)I_n(1+x)$.
When $0 \le x \le e^{-n}$, $\sigma_n=\tau_n(1+x) \ge \tau_{n-2}(1)$. Since $\lambda>0$ we can use distortion estimates to see that
\begin{align*}
\E \left[ h'_{\tau_n}(1)^\lambda \, h'_{\sigma_n}(1+x)^\lambda \, I_n' \right] & \lesssim \E\left[h'_{\tau_{n-2}}(1)^{2\lambda} I_{n}(1) \right]   \lesssim \psi(e^n) e^{-\lambda \beta n}\E\left[h'_{\tau_n}(1)^{\lambda} \right] \lesssim \psi(e^n) e^{-(\nu+\lambda \beta) n}.
\end{align*}
We assume $x \ge e^{-n}$ from now on.
First choose $r$ so that $e^{-r} \le x \le e^{-r+1}$. We may without loss of generality assume $n \ge r+5$. We write $\tau_k=\inf\{t \ge 0: \dist(\gamma_t,1) \le e^{-k}\}$ as usual and define $\sigma_k$ similarly but with $1$ replaced by $1+x$.

We will split the expectation into several events in a manner similar to Section 2.1 of \cite{Law:boundary_mink}. Note that if either event $\{ \tau_n = \infty \}$ or $\{\sigma_n = \infty\}$ occurs then, since $\lambda > 0$, at least one of $h'_{\tau_n}(1)^\lambda, h'_{\sigma_n}(1+x)^\lambda$ (interpreted as limits) is equal to $0$, so \[\E\left[ h'_{\tau_n}(1)^\lambda h'_{\sigma_n}(1+x)^\lambda  \,  1_{\{ \tau_n = \infty\} \cup \{\sigma_n = \infty\}} \, \right] = 0\] and the desired upper bound holds trivially. We thus consider the complement of this event, \[N_n = \{\tau_n < \infty\} \cap \{\sigma_n < \infty\}.\] We now give the definitions and briefly explain how we estimate the expectation on each of the events.
\begin{itemize}
\item{
Define
\[A_n=  \{\tau_n < \sigma_{r+4}\} \cap N_n .\] That is, $A_n$ is the event that the path reaches $B(1,{e^{-n}})$ before $B(1+x, e^{-(r+4)})$. In this case we will use Lemma~\ref{1pt_1}.}
\item{Next, we define
\[B_n=  \{\sigma_n < \tau_n \} \cap N_n.
\]This is the event that the path gets very close to $1+x$ before getting close to $x$. In this case we can use the fact that the excursion measure of $B(1,{e^{-n}})$ in $H_{\sigma_n}=\mathbb{H}\setminus \gamma_{\sigma_n}$ is at most $e^{-2n} / x^2$. This follows easily from a harmonic measure estimate.}
\item{Finally, for $k=r+4, \ldots, n-1,$ let \[
B_{k,n}=  \{\sigma_k < \tau_n < \sigma_{k+1}\} \cap N_n .
\]
This is the event that the path gets to distance $e^{-k}$ from $1+x$, but not closer, before getting to distance $e^{-n}$ from $1$,  and after this it gets very close (i.e. distance $e^{-k-1}$) to $1+x$. We again use the invariant one-point estimate.}
\item{
We will also write $A_n, B_n$, etc.,  for the indicators of these events.}
\end{itemize}
\emph{The estimate on $A_n$:} Recall that when $x > e^{-n}$,  there is a subpower function $\psi$ such that \[ h_{\tau_n}'(1+x)^\lambda I_n(1) \le \psi(1/x) x^{\lambda \beta}.\] See Proposition~\ref{sect:concentration}. Note also that Lemma~\ref{1pt_1} with the stopping time $\tau_n \wedge \sigma_{r+4}$ gives
\[
A_n \E \left[h'_{\sigma_n}(1+x)^\lambda \mid \gamma_{\tau_n \wedge \sigma_{r+4}} \right]  \lesssim \left(\frac{e^{-n}}{e^{-r}}\right)^\nu h'_{\tau_n}(1+x)^\lambda.
\]
Hence, we can estimate
\begin{align*}
\E \left[h'_{\tau_n}(1)^\lambda  h'_{\sigma_n}(1+x)^\lambda I_{n}' A_n \right]
 & = \E \left[ \E \left[ h'_{\tau_n}(1)^\lambda h'_{\sigma_n}(1+x)^\lambda I_{n}' A_n \mid \gamma_{\tau_n \wedge \sigma_{r+4}} \right] \right] \\
 & \le  \E \left[h'_{\tau_n}(1)^\lambda I_n(1) A_n \E \left[h'_{\sigma_n}(1+x)^\lambda \mid \gamma_{\tau_n \wedge \sigma_{r+4}} \right] \right] \\
 & \lesssim \left(\frac{e^{-n}}{e^{-r}}\right)^\nu\E \left[h'_{\tau_n}(1)^\lambda I_n(1) h'_{\tau_n}(1+x)^\lambda \right] \\
 & \lesssim \left(\frac{e^{-n}}{e^{-r}}\right)^\nu \E \left[ h'_{\tau_n}(1)^\lambda \right] \psi(e^r) e^{-r  \lambda \beta } \\
 & \lesssim e^{-\nu (n + n)} e^{-r(\lambda \beta -\nu)}\psi(e^r).
\end{align*}

\emph{The estimate on $B_n$.} Here we will use that $x > 0$ implies $h_t'(1) \le h_t'(1+x)$ and so if $\lambda > 0$ we have the bound
\[I_n(1+x)h_{\sigma_n}'(1)^\lambda \le e^{-\lambda \beta n} \psi(e^n).\]

We will also use the invariant one-point estimate, Lemma~\ref{1pt_2}, which implies
\[
B_n\E\left[ h'_{\tau_n}(1)^\lambda    \mid \gamma_{\tau_n \wedge \sigma_{n}} \right] \lesssim B_n  \,h'_{\sigma_n}(1)^\lambda  \mathcal{E}_{H_{\sigma_n}}\left[g_{\sigma_n}^{-1}(\mathbb{R}_-), B(1, e^{-n}) \right]^\nu \lesssim h'_{\sigma_n}(1)^\lambda  \left( \frac{e^{-2n}}{e^{-2r}} \right)^\nu.
\]
Using these bounds we can estimate as follows:
 \begin{align*}
\E[h'_{\tau_n}(1)^\lambda  h'_{\sigma_n}(1+x)^\lambda I_n' B_n] &=  \E \left[ \E \left[ h'_{\tau_n}(1)^\lambda h'_{\sigma_n}(1+x)^\lambda I_n' B_n \mid \gamma_{\tau_n \wedge \sigma_{n}} \right] \right] \\
& \le \E \left[   h'_{\sigma_n}(1+x)^\lambda I_n(1+x) B_n  \E  \left[h'_{\tau_n}(1)^\lambda    \mid \gamma_{\tau_n \wedge \sigma_{n}} \right] \right] \\
& \lesssim \left(\frac{e^{-n}e^{-n}}{e^{-2r}}\right)^\nu \E \left[ I_n(1+x)  h'_{\sigma_n}(1+x)^\lambda   h'_{\sigma_n}(1)^\lambda\right] \\
& \le \psi(e^n) e^{-\lambda \beta n}\left(\frac{e^{-n}e^{-n}}{e^{-2r}}\right)^\nu \E \left[h'_{\sigma_n}(1+x)^\lambda   \right] \\
& \le \psi(e^n) e^{-\lambda \beta n}\left(\frac{e^{-n}e^{-n}}{e^{-2r}}\right)^\nu e^{-\nu n}\\
& =\psi(e^n) e^{- 2\nu n} e^{-\lambda \beta n} e^{r \nu} e^{-(n-r) \nu}\\
& \le  e^{-2 \nu n} e^{-r(\lambda \beta -\nu)} \left(\frac{e^{-n}}{e^{-r}}\right)^{\lambda \beta + \nu} \psi(e^{-n}) \\
& \le e^{-2 \nu n}e^{-r(\lambda \beta - \nu)} \psi_1(e^r).
\end{align*}
The last step uses that if $0<\delta \le c x$, then there is a subpower function $\psi_1$ such that
\[\sup_{0 < \delta \le c x}(\delta/x)^{\lambda \beta + \nu} \psi(1/\delta) = \sup_{0 < \delta \le c x}(\delta/x)^{\lambda \beta + \nu} \psi( (x/\delta) \cdot 1/x)  \le \psi_1(1/x).\]
\emph{The estimate on $B_{k,n}$.} We will need to sum over $r + 4 \le k \le n-1$.
We first note that from the domain Markov property, the invariant one-point estimate, and monotonicity of $x \mapsto h_t'(x)$,
\begin{align}\label{oct15.1}
B_{k,n} \E\left[h'_{\tau_n}(1)^\lambda  \mid \gamma_{\sigma_k} \right] & \lesssim  h'_{\sigma_k}(1)^\lambda \left( \frac{e^{-n} e^{-k}}{e^{-2r}} \right)^\nu  \lesssim h'_{\sigma_k}(1+x)^\lambda \left( \frac{e^{-n} e^{-k}}{e^{-2r}} \right)^\nu.
\end{align}
Moreover, using Lemma~\ref{1pt_1},
\begin{align}\label{oct15.2}
\E\left[h_{\sigma_n}'(1+x)^\lambda I_n' B_{k,n} \mid \gamma_{\tau_n} \right] & \lesssim I_n(1) h'_{\tau_n}(1+x)^\lambda \left( \frac{e^{-n}}{e^{-k}} \right)^\nu B_{k,n} \le \psi(e^r) e^{-\lambda \beta r} \left( \frac{e^{-n }}{e^{-k}} \right)^\nu B_{k,n}.
\end{align}
Putting these two estimates together we get
\begin{align*}
 \E \left[ h_{\tau_n}'(1)^\lambda h_{\sigma_n}'(1+x)^\lambda  I_n' B_{k,n} \right] & = \E \left[  h_{\tau_n}'(1)^\lambda \E\left[  h_{\sigma_n}'(1+x)^\lambda I_n' B_{k,n} \mid \gamma_{\tau_n} \right] \right] \\
 & \lesssim \E \left[  h_{\tau_n}'(1)^\lambda B_{k,n} \right]  e^{-\lambda \beta r} \left( \frac{e^{-n}}{e^{-k}} \right)^\nu \psi(e^r) \\
 & =  \E \left[ \E \left[  h_{\tau_n}'(1)^\lambda B_{k,n} \mid \gamma_{\sigma_k} \right] \right]  e^{-\lambda \beta r} \left( \frac{e^{-n}}{e^{-k}} \right)^\nu \psi(e^r)\\
 & \lesssim \E \left[ h'_{\sigma_k}(1+x)^\lambda \right]  \left( \frac{e^{-n} e^{-k}}{e^{-2r}} \right)^\nu  e^{- \lambda \beta r} \left( \frac{e^{-n}}{e^{-k}} \right)^\nu \psi(e^r) \\
 & \lesssim e^{-\nu k }\left( \frac{e^{-n} e^{-k}}{e^{-2r}} \right)^\nu  e^{- \lambda \beta r} \left( \frac{e^{-n}}{e^{-k}} \right)^\nu \psi(e^r).
\end{align*}
If we use a union bound and sum this over $k$ from $r+4$ to $n-1$ we get our desired estimate:
\[
\E [ h_{\tau_n}'(1)^\lambda h_{\sigma_n}'(1+x)^\lambda  I_n' \cup_{k=r+4}^{n-1} B_{k,n} ] \lesssim e^{-2 \nu n} e^{-(\lambda \beta - \nu) r } \psi(e^{r}).
\]
The proof is complete.
\end{proof}

\section{Dimension spectrum}

In this section we focus on using the moment estimates to prove Theorem \ref{thm:main}. The proof spans the whole section. First we show that the dimension of the sets must almost surely be constant.

\begin{lemma}
For each $\beta$ the quantity $\hdim V_{\beta}$ is constant almost surely.
\end{lemma}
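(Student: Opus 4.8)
The plan is to establish a zero-one law by exploiting the scaling and Markovian structure of SLE together with the deterministic relation between $V_\beta$ at different points. First I would observe that the set $V_\beta$ is defined by a tail condition: whether $x \in V_\beta$ depends on the asymptotic behavior of $h'_{\tau_s}(x)$ as $s \to \infty$, which is insensitive to the initial behavior of the curve. Concretely, for any fixed stopping time $\tau$ (say a time at which a prescribed portion of the curve has been drawn), the conformal map $g_\tau$ transforms the future evolution into a new SLE$_\kappa$ run in $H_\tau$, and by the chain rule $h'_{\tau_s}(x) = h'_\tau(x) \cdot (h_{\tau,\tau_s})'(h_\tau(x))$; since $h'_\tau(x)$ is a fixed positive constant (on the event $\tau < T_x$), dividing by $s$ and taking $s\to\infty$ kills it. Hence membership of $x$ in $V_\beta$ is measurable with respect to the SLE run from time $\tau$ onward, for every such $\tau$.

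Next I would leverage scale invariance. The law of SLE$_\kappa$ is invariant (up to time change) under the scaling $z \mapsto rz$, and this scaling maps $\R_+$ to $\R_+$ and transforms $V_\beta$ into $r V_\beta$ while preserving Hausdorff dimension, since $\hdim$ is scale invariant. Thus $\hdim V_\beta \stackrel{d}{=} \hdim (rV_\beta) = \hdim V_\beta$ for all $r>0$ — this alone only gives that the law of the random variable $\hdim V_\beta$ is scaling invariant, which is automatic, so the real content must come from combining scaling with the Markov property. The standard approach: show that $\hdim V_\beta$ restricted to $\R_+ \cap (a,b)$ has the same law for all intervals $(a,b)$ (by scaling and translation covariance of the local structure), and that the dimension of $V_\beta$ on any interval equals the dimension on the whole line, because $V_\beta$ is, informally, ``self-similar'' — any subinterval contains, after conformally mapping out an initial hull, a rescaled copy of the whole picture.

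The cleanest route is: let $\mathcal D = \hdim V_\beta$. For a stopping time $\tau = \tau_1(1)$ (or any time when the curve first reaches a macroscopic set), the portion of $V_\beta$ lying in the bounded component cut off, versus the portion in $H_\tau$, decompose $V_\beta$; the part in $H_\tau$, pushed forward by $g_\tau$, is $V_\beta$ for an independent SLE (using that $h'$ transforms multiplicatively and the correction is bounded). Countably many such operations exhaust $\R_+$, so $\mathcal D = \sup_i \mathcal D_i$ where the $\mathcal D_i$ are i.i.d. copies of $\mathcal D$; this forces $\mathcal D$ to be almost surely constant (a random variable equal to the supremum of countably many i.i.d. copies of itself must be a.s. constant, as its essential supremum is attained with probability one). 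The main obstacle I expect is making the decomposition rigorous — specifically, verifying that the conformal map $g_\tau$ does not distort the $\limsup$-type defining condition for $V_\beta$ (the subpower corrections and the relation between $\tau_s(x)$ in $\H$ versus in $H_\tau$ must be controlled, which is exactly the kind of distortion estimate used throughout Section 3), and handling the boundary point where the curve meets $\R$ so that no part of $V_\beta$ is ``lost'' in the decomposition.
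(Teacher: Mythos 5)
Your overall instinct (reduce to a zero--one law) is right, but the key step of your argument --- the identity $\mathcal D=\sup_i\mathcal D_i$ with $\mathcal D_i$ i.i.d.\ copies of $\mathcal D$ --- does not follow from the decomposition you describe, and the inequality you can actually extract points the wrong way. When you map out the hull at a stopping time $\tau$, the image under $h_\tau$ of $V_\beta\cap H_\tau\cap\R_+$ is not the full set $V_\beta$ of the fresh SLE: it is that set intersected with the random half-line $[O_\tau,\infty)$ (the image of the unswallowed part of $\R_+$). Iterating therefore writes $V_\beta$ as a union of sets each of which is a conformal image of a \emph{restriction} of an independent copy of $V_\beta$ to a proper random subinterval. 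This yields only $\mathcal D\le\sup_i\mathcal D_i$, which by Borel--Cantelli says $\mathcal D\le\operatorname{ess\,sup}\mathcal D$ a.s.\ --- vacuous. To conclude constancy you would need $\mathcal D\ge\mathcal D_i$ for infinitely many independent $\mathcal D_i$, i.e.\ that each piece contains a full-dimensional copy, and that is essentially the statement being proved (that the dimension is a.s.\ constant, hence attained on every piece). A secondary issue: your claim that membership of $x$ in $V_\beta$ is measurable with respect to the curve after time $\tau$ fails for points swallowed before $\tau$ without being approached, since for those $x$ the condition $\tau_s(x)<\infty$ for all $s$ is decided by the past; so the sets you split off are not independent of the initial segment in the way the i.i.d.\ structure requires.

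The paper closes the argument differently and more cheaply: by exact scale invariance, $S_x:=V_\beta\cap(0,x)$ has the law of $xS_1$, so $\hdim S_x$ has a law independent of $x$; each $S_x$ is $\F_{T_x}$-measurable, $T_x\downarrow 0$ as $x\downarrow 0$, and the monotone limit $\lim_{x\downarrow 0}\hdim S_x$ is therefore $\F_{0+}$-measurable and constant by Blumenthal's zero--one law; identical laws plus monotone convergence to that constant force $\hdim S_x$ to equal it for every $x$, and letting $x\to\infty$ gives the lemma. In other words, the zero--one law is applied at the \emph{germ} $\sigma$-algebra at time $0$ via scaling toward the origin, which sidesteps entirely the Markov-decomposition and tail-measurability difficulties on which your version founders. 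If you want to salvage your route, you would need to first prove a statement like $\hdim(V_\beta\cap J)\stackrel{d}{=}\hdim V_\beta$ for the relevant random intervals $J$, and at that point the scaling-to-the-origin argument is the simpler way to get it.
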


\begin{proof}
For $x > 0$ let $S_x = V_{\beta} \cap (0,x)$. Using scaling properties of SLE it can be shown that for any $x > 0$ the random set $S_x$ has the same law as $x S_1$. Since Hausdorff dimension is unchanged by linear scaling we have $\hdim xS_1 = \hdim S_1$, and therefore $\hdim S_x$ has a law which doesn't depend on $x$. But $\hdim S_x$ has an almost sure limit as $x \to 0+$ (since the sets are decreasing) and this limit is $\F_{0+}$-measurable since each $S_x$ is $\F_{T_x}$-measurable. Since the filtration is that of a Brownian motion, the Blumenthal $0$-$1$ law implies that the limit is almost surely constant. But each $\hdim S_x$ has the same law, and therefore must be that same constant for all $x > 0$.
\end{proof}

To compute the constant we will use a slightly different version of the sets $V_{\beta}$. Observe that by monotonicity of the derivative $t \mapsto h_t'(x)$ we may modify the definition of $V_{\beta}$ by replacing the real variable $s \to \infty$ with an integer variable $n \to \infty$. Therefore it is sufficient to compute the almost sure Hausdorff dimension of the random sets
\begin{align*}
V_{\beta} = \left \{ x \in \R_+ : \lim_{n \to \infty} \frac{1}{n} \log h'_{\tau_n}(x) = -\beta, \tau_n = \tau_n(x) < \infty \,\, \forall \,\, n \right \}.
\end{align*}

\subsection{Upper bound on dimension \label{sec:upper_bound}}

For the upper bound on the dimension of $V_{\beta}$ we define sets $\overline{V}_{\beta}$ and $\underline{V}_{\beta}$ by
\begin{align*}
\overline{V}_{\beta} &= \left \{ x \in \R_+ : h_{\tau_n}'(x) \geq e^{-\beta n}, \tau_n = \tau_n(x) < \infty \textrm{ i.o. in } n \right \}, \\
\underline{V}_{\beta} &= \left \{ x \in \R_+ : h_{\tau_n}'(x) \leq e^{-\beta n}, \tau_n = \tau_n(x) < \infty \textrm{ i.o. in } n \right \}.
\end{align*}
Clearly $V_{\beta}$ is a subset of both sets. Recall that $d(\beta) = 1 + \lambda \beta - \nu$. We will first prove the following:

\begin{theorem}\label{thm:upper_bound}
Suppose $\kappa > 4$. Then
\begin{enumerate}[(i)]
\item if $\beta \in [\beta_{-}, \hat{\beta}]$ then $\hdim \overline{V}_{\beta} \leq d(\beta)$ a.s.,
\item if $\beta < \beta_{-}$ then $\overline{V}_{\beta} = \emptyset$ a.s.,
\item if $\beta \in [\hat{\beta}, \beta_+]$ then $\hdim \underline{V}_{\beta} \leq d(\beta)$ a.s.,
\item if $\beta > \beta_+$ then $\underline{V}_{\beta} = \emptyset$ a.s.
\end{enumerate}
\end{theorem}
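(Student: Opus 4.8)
The plan is to use a first-moment (union bound) argument over dyadic coverings of $\R_+$, exactly the standard approach to upper bounds on Hausdorff dimension, feeding in the one-point moment estimate of Corollary~\ref{cor1} and Proposition~\ref{thm:moment_ub_1}. Fix a bounded interval, say $(0,1]$, since $V_\beta$ is a countable union of scaled copies and Hausdorff dimension is stable under countable unions. For each $n$, partition $(0,1]$ into $\sim e^{n}$ intervals $J$ of length $e^{-n}$, with left endpoints $x_J$. If $x \in \overline V_\beta$ then for infinitely many $n$ there is an interval $J$ of the $n$-th generation containing $x$ with $\tau_n(x)<\infty$ and $h'_{\tau_n}(x) \ge e^{-\beta n}$; by monotonicity of $h'_t(x)$ in $t$ and of $h'_t$ in $x$, together with distortion/Koebe estimates near the boundary, one can transfer this to a statement at the left endpoint (up to shifting $n$ by a bounded constant and absorbing subpower factors), namely $\tau_{n-c}(x_J)<\infty$ and $h'_{\tau_{n-c}}(x_J) \ge e^{-\beta n - c}$. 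Thus $\overline V_\beta \cap (0,1]$ is covered, for every $N$, by the union over $n \ge N$ of those generation-$n$ intervals $J$ whose left endpoint satisfies this event. To estimate the expected number of such intervals we apply Markov's inequality: for $\lambda > 0$ with parameters matched as in \eqref{eqn:beta_lambda_def},
\[
\PP\left\{ \tau_n(x_J) < \infty,\ h'_{\tau_n}(x_J) \ge e^{-\beta n}\right\} \le e^{\lambda \beta n}\, \E\left[h'_{\tau_n}(x_J)^\lambda \1{\tau_n<\infty}\right] \lesssim e^{\lambda \beta n} e^{-\nu n} = e^{-(d(\beta)-1)n}\psi(e^n),
\]
using Corollary~\ref{cor1} (and its $\lambda<0$ extension noted after Corollary~\ref{cor1}); note $x_J \ge 1$ fails for our interval, so one should instead work on $(1,2]$ or rescale, a harmless bookkeeping point. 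The expected number of ``bad'' generation-$n$ intervals is then $\lesssim e^{n} \cdot e^{-(d(\beta)-1)n} = e^{d(\beta) n}$ up to subpower corrections. Hence $\sum_J (\operatorname{diam} J)^{d(\beta)+\ee} = e^{-(d(\beta)+\ee)n} \cdot (\text{count})$ has expectation $\lesssim e^{-\ee n}\psi(e^n) \to 0$, so by Borel--Cantelli / Fatou the $(d(\beta)+\ee)$-Hausdorff content of $\overline V_\beta$ vanishes a.s., giving $\hdim \overline V_\beta \le d(\beta)+\ee$ for every $\ee>0$. This proves (i); for (ii), when $\beta < \beta_-$ one checks $d(\beta) < 0$, so the same sum shows the expected number of bad intervals is summable and $\overline V_\beta$ is a.s.\ empty.

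For (iii) and (iv) the argument is the mirror image: on $\underline V_\beta$ one has $h'_{\tau_n}(x) \le e^{-\beta n}$, so one should weight by a \emph{negative} power to make the Markov bound run in the right direction. Concretely, for $\lambda < 0$ (equivalently $\nu \in (\nu_c,\nu_0)$) one uses $\PP\{h'_{\tau_n}(x_J)\le e^{-\beta n},\ \tau_n<\infty\} \le e^{\lambda\beta n}\E[h'_{\tau_n}(x_J)^\lambda \1{\tau_n<\infty}] \lesssim e^{(\lambda\beta-\nu)n}\psi(e^n) = e^{(d(\beta)-1)n}\psi(e^n)$, and the covering argument proceeds verbatim. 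The role of $\hat\beta$ is that it is exactly the value $\nu=\nu_0$, $\lambda=0$ separating the two regimes: for $\beta<\hat\beta$ the relevant $\lambda$ is positive and one controls $\overline V_\beta$, for $\beta>\hat\beta$ it is negative and one controls $\underline V_\beta$. For $\beta>\beta_+$, $d(\beta)<0$ and the same summability argument kills $\underline V_\beta$.

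The main obstacle is the transfer from the event ``$x\in J$ with $\tau_n(x)<\infty$, $h'_{\tau_n}(x)\gtrless e^{-\beta n}$'' to a clean event about the fixed representative point $x_J$ to which the one-point moment estimate applies. This requires: (a) comparing $\tau_n(x)$ and $\tau_{n'}(x_J)$ for $x,x_J$ in the same $e^{-n}$-interval — here one uses that $\operatorname{dist}(x,\gamma_t)$ and $\operatorname{dist}(x_J,\gamma_t)$ differ by at most $|x-x_J|\le e^{-n}$, so the hitting times agree up to shifting the level by a bounded additive constant; (b) comparing the derivatives $h'_t(x)$ and $h'_t(x_J)$, which is a bounded-distortion (Koebe) estimate valid because we are at the boundary and the points are within one ball-radius of each other, once $t$ is past the time the curve separates the interval from $\infty$; and (c) handling the fact that $\lim_{n} n^{-1}\log h'_{\tau_n}(x) = -\beta$ is replaced by the two one-sided ``infinitely often'' conditions, which is why $\overline V_\beta$ and $\underline V_\beta$ are introduced — and $V_\beta \subset \overline V_\beta \cap \underline V_\beta$, so the minimum of the two bounds, namely $d(\beta)$, is the bound for $V_\beta$ itself. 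All of the distortion bookkeeping only costs subpower factors $\psi(e^n)$, which are harmless since we have the slack $\ee>0$ in the exponent; making this precise is routine but is where the real work lies.
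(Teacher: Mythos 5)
Your proposal is correct and follows essentially the same route as the paper: cover a fixed compact interval by $\sim e^{n}$ intervals of length $e^{-n}$, transfer the event at $x$ to a representative point of its interval via monotonicity and distortion estimates (costing only a bounded shift in $n$), apply Markov's inequality with the matched exponent ($\lambda>0$ for $\overline{V}_{\beta}$, $\lambda<0$ for $\underline{V}_{\beta}$, the latter handled via the radial-time moment bound), and conclude from summability of the expected Hausdorff content, with emptiness when $d(\beta)<0$. The only blemish is a sign typo in your intermediate display: the one-point estimate gives $e^{(\lambda\beta-\nu)n}=e^{(d(\beta)-1)n}$, not $e^{-(d(\beta)-1)n}$, which is what makes your subsequent count $e^{n}\cdot e^{(d(\beta)-1)n}=e^{d(\beta)n}$ come out right.
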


\begin{remark}\label{rem:ub_simplification}
It is sufficient to prove the theorem for $\overline{V}_{\beta} \cap J$ for every closed subinterval $J$ of $\R_+$. For the sake of concreteness we will carry out the proof for $J = [1,2]$, but it will be clear that the same arguments work for an arbitrary interval $J$ with only slightly modified constants that do not affect the conclusions of the argument.
\end{remark}

To prove these upper bounds we construct suitable covers of the sets. We first describe the cover for $\overline{V}_{\beta} \cap [1,2]$. For each $n \geq 1$ let
\begin{align*}
J_{j,n} = [1 + j e^{-n}/2, 1 + (j+1) e^{-n}/2), \quad j = 1, 2, \ldots 2 e^{n} - 1
\end{align*}
so that $J_{j,n}$ is an interval of length $|J_{j,n}| = e^{-n}/2$. For each $n$ we write $\mathcal{J}_n$ for the set of $\{J_{j,n}\}_j$. Let $x_{j,n}$ be the midpoint of $J_{j,n}$. By standard distortion estimates the derivative $h_{\tau_n(x)}'(x)$ does not vary much in a small neighborhood of $x$, that is, there is a universal constant $c_0 > 0$ such that
\begin{align}\label{eqn:deriv_neighbor}
h_{\tau_n(x)}'(x) \geq e^{-\beta n} \textrm{ for some } x \in I_{j,n} \implies h_{\tau_n(x)}'(x_{j,n}) \geq c_0 e^{-\beta n}.
\end{align}
Moreover, for simple geometric reasons the curve must hit the ball of radius $e^{-(n-2)}$ centered at $x_{j,n}$ before it hits any ball of radius $e^{-n}$ centered at a point in $J_{j,n}$, so therefore
\begin{align*}
\tau_{n-2}(x_{j,n}) \leq \min_{x \in I_{j,n}} \tau_n(x).
\end{align*}
Combining this with the fact that $t \mapsto h_t'(x_{j,n})$ is a decreasing function implies that \eqref{eqn:deriv_neighbor} reduces to
\begin{align*}
h_{\tau_n(x)}'(x) \geq e^{-\beta n} \textrm{ for some } x \in J_{j,n} \implies h_{\tau_{n-2}(x_{j,n})}'(x_{j,n}) \geq c_1 e^{-\beta (n-2)},
\end{align*}
with $c_1 = c_0 e^{-2 \beta}$. Defining now $J_{n,-}(\beta)$ by
\begin{align*}
J_{n, -}(\beta) = \bigcup J_{j,n},
\end{align*}
where the union is over all $j \in \{1, 2, \ldots, 2e^{n} - 1 \}$ such that $h_{\tau_{n-2}(x_{j,n})}'(x_{j,n}) \geq c_1 e^{-\beta(n-2)}$, then the latter implication becomes
\begin{align*}
\{ x \in [1,2] : h_{\tau_n(x)}'(x) \geq e^{-\beta n} \} \subset J_{n, -}(\beta).
\end{align*}
This inclusion and the definition of $\overline{V}_{\beta}$ implies that
\begin{align}\label{eqn:V_over_cover}
\overline{V}_{\beta} \cap [1,2] \subset \bigcup_{n \geq m} J_{n, -}(\beta),
\end{align}
for every  $m$. We now have the following:

\begin{lemma}\label{lemma:n_over_bound}
Let $\overline{\mN}_n=\overline{\mN}_n(\beta)$ be the number of $J_{j,n}$ intervals whose union forms $J_{n, -}(\beta)$, and assume $\kappa > 4$ and $\beta < \hat{\beta}$. Then
\begin{align*}
\E \left[ \overline{\mN}_{n} \right] \lesssim e^{n(1 + \lambda \beta - \nu)}.
\end{align*}
\end{lemma}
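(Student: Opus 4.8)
The plan is a first-moment (union) bound resting entirely on the one-point estimate of Corollary~\ref{cor1}. By linearity of expectation (and the convention that an interval $J_{j,n}$ is counted only when $\tau_{n-2}(x_{j,n}) < \infty$, consistent with the definition of $\overline{V}_\beta$),
\[
\E\left[\overline{\mN}_n\right] = \sum_{j} \PP\left\{ h'_{\tau_{n-2}(x_{j,n})}(x_{j,n}) \ge c_1 e^{-\beta(n-2)}, \ \tau_{n-2}(x_{j,n}) < \infty \right\},
\]
where the sum has at most $2e^n$ terms and each midpoint $x_{j,n}$ lies in the fixed interval $[1,2]$ (cf. Remark~\ref{rem:ub_simplification}). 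It therefore suffices to bound each summand by $\lesssim e^{n(\lambda\beta-\nu)}$ uniformly in $j$.

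For a single term I would apply Markov's inequality to the $\lambda$-th moment of the derivative. Note that the hypothesis $\beta < \hat{\beta}$ is equivalent to $\nu > \nu_0$, i.e.\ to $\lambda > 0$ (recall $\beta$ is strictly decreasing in $\nu$ on $(\nu_c,\infty)$, and $\lambda = \frac{\nu}{2a}(\nu - 2\nu_c) > 0$ exactly when $\nu > 2\nu_c = \nu_0$); this is precisely the regime in which $t \mapsto h'_t(x)^\lambda$ is monotone and in which Corollary~\ref{cor1} applies. Markov then gives
\[
\PP\left\{ h'_{\tau_{n-2}}(x_{j,n}) \ge c_1 e^{-\beta(n-2)}, \ \tau_{n-2} < \infty \right\} \le \left(c_1 e^{-\beta(n-2)}\right)^{-\lambda}\, \E\left[ h'_{\tau_{n-2}}(x_{j,n})^\lambda \1{\tau_{n-2} < \infty} \right],
\]
and Corollary~\ref{cor1} bounds the expectation by $\asymp x_{j,n}^{-\nu} e^{-\nu(n-2)}$, with an implicit constant depending only on $a$ and $\lambda$. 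Since $x_{j,n} \in [1,2]$, the factor $x_{j,n}^{-\nu}$ is bounded above and below by constants, so each summand is $\lesssim e^{\lambda\beta(n-2)} e^{-\nu(n-2)} \asymp e^{(\lambda\beta-\nu)n}$, uniformly in $j$.

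Summing the (at most $2e^n$) per-term bounds yields $\E[\overline{\mN}_n] \lesssim e^n \cdot e^{(\lambda\beta-\nu)n} = e^{n(1+\lambda\beta-\nu)}$, which is the assertion since $d(\beta) = 1 + \lambda\beta - \nu$. I do not expect any real obstacle: all the geometric work --- passing from $\{x \in J_{j,n} : h'_{\tau_n(x)}(x) \ge e^{-\beta n}\}$ to the condition $h'_{\tau_{n-2}(x_{j,n})}(x_{j,n}) \ge c_1 e^{-\beta(n-2)}$ at the midpoint, via the distortion theorem --- has already been done in the discussion preceding the lemma, and the entire analytic input is Corollary~\ref{cor1}. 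The only two points deserving a word of care are (i) checking the equivalence $\beta < \hat{\beta} \iff \lambda > 0$ so that Corollary~\ref{cor1} is legitimately invoked, and (ii) observing that its implicit constant is uniform over the midpoints $x_{j,n}$ because they all lie in the fixed compact interval $[1,2]$, so that the $e^n$-fold summation contributes no $n$-dependent factor.
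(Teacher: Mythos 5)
Your proof is correct and follows essentially the same route as the paper's: a union bound over the $O(e^n)$ intervals, Markov/Chebyshev applied to the $\lambda$-th moment of the derivative at the midpoints, and the one-point estimate (Corollary~\ref{cor1}) to bound each term by $\lesssim e^{(\lambda\beta-\nu)n}$. Your added checks --- that $\beta<\hat\beta$ is equivalent to $\lambda>0$ so the corollary applies, and that the implicit constants are uniform over $x_{j,n}\in[1,2]$ --- are exactly the points the paper leaves implicit.
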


\begin{proof}
By Proposition \ref{thm:moment_ub_1} and the Chebyshev inequality we have
\begin{align*}
\E \left[ \overline{\mathcal{N}}_n\right] =  \sum_{j=3}^{2e^n-3}\PP\left\{h'_{\tau_{n}}(x_{j,n}) \ge c_0'e^{-\beta n}; \, \tau_{n} < \infty \right\} \lesssim  e^{\lambda \beta n} \sum_{j=3}^{2e^n-3} \E\left[h'_{\tau_{n}}(x_{j,n})^\lambda \, \1{\tau_{n} < \infty} \right] \lesssim e^{n(1+\lambda \beta - \nu)}.
\end{align*}
\end{proof}

From this we easily derive the following:

\begin{proof}[Proof of Theorem~\ref{thm:upper_bound}, (i) and (ii)]
First note that Lemma~\ref{lemma:n_over_bound} implies that for each $s > d(\beta)$ and $n$ sufficiently large
\begin{align*}
\E\left[ \overline{\mathcal{N}}_n(\beta)\right] \lesssim e^{ns}.
\end{align*}
But the $s'$-dimensional Hausdorff measure of $\overline{V}_{\beta} \cap [1,2]$ is bounded above by the $s'$-dimensional content of the cover provided in \eqref{eqn:V_over_cover}. Taking expectations this gives
\begin{align*}
\E\left[\mathcal{H}^{s'}\left(\overline{V}_{\beta} \cap [1,2] \right)\right]  \le c \lim_{m \to \infty} \sum_{n=m}^\infty \E\left[\overline{\mathcal{N}}_n(\beta) \right]e^{-ns'}=0,
\end{align*}
for $s' > s$. This implies that $\mathcal{H}^{s'}\left(\overline{V}_{\beta} \cap [1,2] \right)=0$ almost surely, and by Remark \ref{rem:ub_simplification} part (i) of the Theorem follows. Part (ii) holds since for $\beta < \beta_{-}$ we have $1+\lambda \beta - \nu <0$, and so
\[
\PP\left\{ \overline{V}_{\beta} \cap[1,2] \neq \emptyset \right\} \le \lim_{m \to \infty} \sum_{n = m}^{\infty} \E \left[ \overline{\mathcal{N}}_n(\beta) \right] = 0.
\]
\end{proof}

We now describe the cover for $\underline{V}_{\beta} \cap [1,2]$. This case corresponds to $\lambda < 0$ and although we could use the comment after Proposition~\ref{prop:event} we will use the moment bound on $\tilde{h}_{s}'$ directly. Suppose that $x \in \underline{V}_{\beta}$. Let $n \geq 1$ be such that $h_{\tau_n(x)}'(x) \leq e^{-\beta n}$. There is a universal constant $c_1$ such that with $s_n : = n/a+c_1$ this implies $\tilde{h}_{s_n}'(x) \le e^{-\beta n}$. Distortion estimates show that there is a smallest integer $c_2$ such that if $J$ is the unique interval in $\mathcal{J}_{n + c_2}$ with $x \in J$ then for all $z \in J$ we have $h_{\tilde{t}(s_n)}'(z) \asymp \tilde{h}_{s_n}'(x)$. In particular $h_{\tilde{t}(s_n)}'(x_J) \lesssim e^{-\beta n}$; here $\tilde{t}(s_n) = \tilde{t}_x(s_n)$. Moreover, there is a universal $c_3$ such that $\tilde{t}_{x_J}(n/a+c_3) \ge \tilde{t}_x(s_n)$ and this implies that $h_{\tilde{t}_{x_J}(n/a+c_3)}'(x_J) \le c_4 e^{-\beta n}$. We thus conclude that there are universal constants $c_2,c_3,c_4$ such that any $x \in \underline{V}_{\beta}$ is contained in an interval of $\mathcal{J}_{n+c_2}$ and $\tilde{h}_{n/a+c_3}'(x_J) \le c_4 e^{-\beta n}$ holds where $x_J$ is the midpoint of $J$.
åThus we can choose universal constants $c_5, c_6$ such that the following sets cover $\underline{V}_{\beta} \cap [1,2]$. Define
\begin{align*}
J_{n, +}(\beta) = \bigcup J_{j,n}
\end{align*}
where the union is over all $j \in \{ 1, 2, \ldots, 2 e^{n} - 1 \}$ such that $\tilde{h}_{n/a+c_5}'(x_{j,n}) \leq c_6 e^{-\beta n}$. Therefore
\begin{align*}
\underline{V}_{\beta} \subset \bigcup_{n \geq m} J_{n,+}(\beta)
\end{align*}
for every $m$. From this we have the following:
\begin{lemma}\label{lemma:n_under_bound}
Let $\underline{\mathcal{N}}_{\beta}$ be the number of $J_{j,n}$ intervals whose union forms $J_{n, +}(\beta)$ and assume that $\kappa > 4$ and $\beta > \hat{\beta}$. Then there exists a constant $c = c(\kappa) < \infty$ such that
\begin{align*}
\E[\underline{\mathcal{N}}_{\beta}] \lesssim e^{n (1 + \lambda \beta - \nu)}.
\end{align*}
\end{lemma}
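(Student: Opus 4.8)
The plan is to mirror the proof of Lemma~\ref{lemma:n_over_bound} almost verbatim, but using the moment bound on $\tilde h_s'$ from Proposition~\ref{thm:moment_ub_1} rather than Corollary~\ref{cor1}, since here $\lambda < 0$. By definition, $\underline{\mathcal N}_\beta$ counts the indices $j$ for which $\tilde h_{n/a+c_5}'(x_{j,n}) \le c_6 e^{-\beta n}$, so
\[
\E[\underline{\mathcal N}_\beta] = \sum_j \PP\left\{ \tilde h_{n/a+c_5}'(x_{j,n}) \le c_6 e^{-\beta n}, \, \tilde t(n/a+c_5) < \infty \right\}.
\]
First I would note the elementary inequality $\1{\tilde h_{s}'(x) \le c_6 e^{-\beta n}} \le (c_6 e^{-\beta n})^{\lambda}\, \tilde h_s'(x)^{\lambda}$, which is valid precisely because $\lambda < 0$ (for $\beta > \hat\beta$ we indeed have $\nu < \nu_0$, hence $\lambda < 0$); the event $\{\tilde t(s)<\infty\}$ is incorporated because $\tilde h_s'(x)$ is interpreted as $0$ off that event. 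This is the analogue of the Chebyshev step in Lemma~\ref{lemma:n_over_bound}, just with a negative power.

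Next I would apply Proposition~\ref{thm:moment_ub_1}, which requires $\lambda > \lambda_c = -\nu_c^2/(2a)$ and $x_{j,n} \ge 1$: since the intervals $J_{j,n}$ lie in $[1,2]$, all midpoints satisfy $x_{j,n} \ge 1$, and one checks that the range $\beta \in [\hat\beta, \beta_+]$ corresponds to $\nu \in (\nu_c, \nu_0]$-type values with $\lambda > \lambda_c$ (this is exactly the admissible range for which $d(\beta) \ge 0$). Thus
\[
\E\left[ \tilde h_{n/a+c_5}'(x_{j,n})^{\lambda} \1{\tilde t(n/a+c_5) < \infty}\right] \asymp x_{j,n}^{-\nu} e^{-\nu a (n/a + c_5)} \asymp e^{-\nu n},
\]
with constants depending only on $a$ and $\lambda$ (and absorbing the $c_5$-shift and the bounded factor $x_{j,n}^{-\nu} \in [2^{-\nu},1]$). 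Summing over the $j \in \{1,\ldots,2e^n-1\}$ gives a factor $O(e^n)$, and combining with the $(c_6 e^{-\beta n})^\lambda \asymp e^{-\lambda \beta n}$ prefactor yields
\[
\E[\underline{\mathcal N}_\beta] \lesssim e^n \cdot e^{-\lambda\beta n}\cdot e^{-\nu n}.
\]
Wait — the sign: since $\lambda<0$, $e^{-\lambda\beta n} = e^{|\lambda|\beta n}$, which is $e^{\lambda\beta n}$ would be wrong; one must be careful that $(c_6 e^{-\beta n})^\lambda = c_6^\lambda e^{-\lambda \beta n}$, and $-\lambda\beta = |\lambda|\beta > 0$, so this equals $e^{|\lambda|\beta n}$. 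Hmm, that would give $\E[\underline{\mathcal N}_\beta] \lesssim e^{n(1 + |\lambda|\beta - \nu)} = e^{n(1 - \lambda\beta - \nu)}$, not $e^{n(1+\lambda\beta-\nu)}$. The resolution is that the bound should read $\1{\tilde h_s' \le c_6 e^{-\beta n}} \le (c_6 e^{-\beta n})^{-|\lambda|} (\tilde h_s')^{-|\lambda|}$... no. Let me restate cleanly: for $\lambda<0$ and $0 \le y \le z$ one has $y^\lambda \ge z^\lambda$, so $\1{y \le z} \le (y/z)^\lambda \le z^{-\lambda} y^\lambda$ fails too. The correct inequality: $\1{y \le z} = \1{y^\lambda \ge z^\lambda} \le z^{-\lambda} y^\lambda$ — yes this is right, since $y^\lambda \ge z^\lambda > 0$ implies $z^{-\lambda} y^\lambda \ge 1$. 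So the prefactor is $z^{-\lambda} = (c_6 e^{-\beta n})^{-\lambda} = c_6^{-\lambda} e^{\lambda\beta n}$, since $-(-\beta n)\lambda = \beta n \lambda$. Good — so the prefactor is genuinely $e^{\lambda \beta n}$ with the correct sign (negative exponent since $\lambda<0$), and
\[
\E[\underline{\mathcal N}_\beta] \lesssim e^n \cdot e^{\lambda\beta n} \cdot e^{-\nu n} = e^{n(1 + \lambda\beta - \nu)},
\]
as claimed. The main obstacle is purely bookkeeping: tracking the sign in the Chebyshev-type bound (which is why $\lambda<0$ is essential and flips the direction of the indicator inequality), and verifying that $\lambda$ stays in the admissible range $\lambda > \lambda_c$ for all $\beta \in [\hat\beta,\beta_+]$ so that Proposition~\ref{thm:moment_ub_1} applies; both are routine once set up correctly. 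No new ideas beyond Lemma~\ref{lemma:n_over_bound} are needed.
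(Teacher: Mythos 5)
Your proof is correct and is exactly the argument the paper intends (the paper simply states that the proof is the same as that of Lemma~\ref{lemma:n_over_bound}): a Markov-type bound with the inequality reversed because $\lambda<0$, giving the prefactor $(c_6 e^{-\beta n})^{-\lambda}\asymp e^{\lambda\beta n}$, followed by Proposition~\ref{thm:moment_ub_1} and summation over the $O(e^n)$ intervals. Your sign bookkeeping lands in the right place, and the check that $\lambda>\lambda_c$ for $\nu>\nu_c$ is indeed all that is needed to apply the moment estimate.
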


The proof is the same as for Lemma \ref{lemma:n_over_bound}, and then the proof for parts (iii) and (iv) of Theorem \ref{thm:upper_bound} is the same as for parts (i) and (ii).

\subsection{Lower bound on dimension \label{sec:lower_bound}}
We now assume $\lambda > 0$ with matching values of $\nu, \beta$ such that $d \in (0,1)$.
The proof of the lower bound on dimension is now rather standard and uses Frostman's lemma, see, e.g., \cite{lawler:bolyai} for a similar construction. We construct the Frostman measure as follows.
Let $J_{j,n} = [1+(j-1)e^{-n}, 1+je^{-n})$ and let $x_{j,n}$ be the midpoint of $J_{j,n}$. Fix $u < \infty$ and a subpower function $\psi$ as in Proposition~\ref{prop:event}. Let $I_n(x)=I_n(x, \psi, u)$ be as in Proposition~\ref{prop:event} with $t=n$ and $1$ replaced by $x$. Let $\mu_{j,n}$ be the random measure on $[1,2]$ defined by
\[
\mu_{j,n}(dx) = e^{\nu n} h_{\tau_n}'(x_{j,n})^{\lambda} I_n(x_{j,n}) \chi_{J_{j,n}} dx, \quad n=1,2,\ldots
\]
From Proposition~\ref{prop:event} we have $\E \left[\mu_{j,n}([1,2]) \right] = \E \left[\mu_{j,n}(J_{j,n}) \right] \asymp |J_{j,n}| = e^{-n}$. We set
\[
\mu_n(dx) = \sum_{j=1}^{e^n-1}\mu_{j,n}(dx),
\]
and note that $\E \left[\mu_n\left( [1,2] \right) \right] \asymp 1$. We claim that there is an event of positive probability on which we may take a subsequential limit of the $\mu_n$ to obtain a Frostman measure on $V_{\beta}$. From the definition of $I_n$ it is clear that any such limiting measure has a support contained in the set $V_\beta$. We first show the existence of the event of positive probability on which we may take a non-trivial subsequential limit. By the estimates above we have that $\E\left[\mu_n\left( [1,2] \right) \right] \ge c_0 > 0$ and we will soon prove that $ \E \left[ \mu_n\left( [1,2] \right) ^2 \right] \le c_1 < \infty$. Assuming the latter bound holds we have by Cauchy-Schwarz that
\[\E\left[ \mu_n\left( [1,2] \right) \right]^2 \le \E \left[ \mu_n\left( [1,2] \right)^2 \right] \PP\left\{ \mu_n\left( [1,2] \right)  > 0  \right\},\] and thus $\PP\left\{ \mu_n\left( [1,2] \right) >0   \right\} \ge c_1^{-1} c_0^2 > 0$, uniformly in $n$. Consequently, by this uniformity, the specified event of positive probability exists.

We now use Theorem~\ref{thm:correlation} to derive the upper bound on the second moment of $\mu_n([1,2])$. Consider first the diagonal terms. We have
\[
\sum_{j=1}^{e^n-1} \E \left [\mu_{j,n}\left( [1,2] \right)^2 \right]  = e^{2\nu n - 2n} \sum_{j=1}^{e^n-1} \E \left[h_{\tau_n}'(x_j)^{2\lambda} I_n(x_j) \right] \le \psi(e^n)  e^{2\nu n - 2n}e^{n - (\lambda \beta + \nu) n } \le \psi(e^n) e^{- d n},
\]
where $d = 1-(\nu-\lambda \beta)  \in (0,1)$.
As for the other terms we have (and write $x_j = x_{j,n}$ etc.)
\begin{align*}
e^{2\nu n - 2n} \sum_{j \neq k} \E \left[h_{\tau_n}'(x_j)^{\lambda}h_{\sigma_n}'(x_k)^\lambda I_n(x_j) I_n(x_k)  \right] &
\lesssim  e^{-2n} \sum_k \sum_{j>k} \psi \left(\frac{e^n}{j-k}\right)\left(\frac{j-k}{e^{n}}\right)^{d-1} \le c.
\end{align*}
In the last step we estimated using the integral $\int_0^1 \psi(1/x) x^{d-1} dx < \infty$ which is finite since $\psi$ is a subpower function and $d-1 > -1$.

Let $\mathcal{E}_\alpha(\mu) = \iint |x-y|^{-\alpha} \mu(dx) \mu(dy)$ be the $\alpha$-dimensional energy of the measure $\mu$. We claim that $\alpha < d$ implies that there is a constant $c<\infty$ depending only on $\alpha$ such that for all $n$,
$\E \left[ \mathcal{E}_\alpha(\mu_n) \right] \le c.$
Indeed, let $\ee> 0$ and $\alpha = d-\ee$. Theorem~\ref{thm:correlation} implies that the contribution from the $O(e^n)$ diagonal terms is $o(1)$:
\begin{align*}
O(e^n) \E \left[ \iint \frac{\mu_{j,n}(dx) \mu_{j,n}(dy)}{|x-y|^{d-\ee}}\right] & \le O(e^n) \psi(e^n) e^{-(2-d+\ee)n}e^{-n(d-1)}=\psi(e^n)O(e^{-\ee n}).
\end{align*}
Moreover, also using Theorem~\ref{thm:correlation}, if $j > k$,
\begin{align*}
\E \left[ \iint \frac{\mu_{j,n}(dx) \mu_{k,n}(dy)}{|x-y|^{d-\ee}}\right] & \le e^{-2n} \psi\left( \frac{e^n}{j-k}\right) \left(\frac{j-k}{e^n}\right)^{-d +\ee + d-1}.
\end{align*}
We sum this first over $j > k$ and then over $k$. The result is bounded by a constant since $\ee > 0$ and $\psi$ is a subpower function.
The estimates hold uniformly in $n$ and we conclude that for every $\ee > 0$ the limiting measure $\mu$ is non-trivial and has finite $(d-\ee)$-dimensional energy on an event of strictly positive probability.

Since the Hausdorff dimension is constant almost surely, a lower bound with positive probability is an an almost sure lower bound. In conclusion we have proved the following theorem, and this concludes the proof of Theorem~\ref{thm:main}.
\begin{theorem}
Let $\beta \in [\beta_-, \hat{\beta}]$. For every $\ee > 0$, \[\PP\left\{ \dim_H V_\beta \ge d-\ee \right\} = 1.\]
\end{theorem}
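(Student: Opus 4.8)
The plan is to exhibit, on an event of strictly positive probability, a nontrivial random measure carried by $V_\beta$ with finite $(d-\ee)$-dimensional energy; by the energy method (Frostman's lemma) this forces $\hdim V_\beta \ge d-\ee$ on that event, and since $\hdim V_\beta$ is almost surely constant, it then holds almost surely. Throughout I work with $V_\beta \cap [1,2]$, which suffices by the scaling argument already used for the constancy lemma, and I fix $u<\infty$ and a subpower $\psi$ so that Proposition~\ref{prop:event} applies uniformly to the relevant points of $[1,2]$.

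First I would set up the approximating measures. Partition $[1,2]$ into the level-$n$ intervals $J_{j,n}$ of length $e^{-n}$ with midpoints $x_{j,n}$, and put
\[
\mu_n(dx) = \sum_{j} e^{\nu n}\, h_{\tau_n}'(x_{j,n})^{\lambda}\, I_n(x_{j,n})\, \chi_{J_{j,n}}(x)\, dx .
\]
The normalization $e^{\nu n}$ is chosen precisely so that the one-point estimate $\E[h_{\tau_n}'(x_{j,n})^\lambda I_n(x_{j,n})] \asymp e^{-\nu n}$ of Proposition~\ref{prop:event} gives $\E[\mu_n([1,2])]\asymp 1$ uniformly in $n$. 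The essential input is the two-point bound of Theorem~\ref{thm:correlation}: expanding $\E[\mu_n([1,2])^2]$, the $O(e^n)$ diagonal terms contribute $\lesssim \psi(e^n)e^{-dn}=o(1)$ (using $\E[h'^{2\lambda}_{\tau_n} I_n]\le \psi(e^n)e^{-(\lambda\beta+\nu)n}$), while the off-diagonal sum is $\lesssim e^{-2n}\sum_k\sum_{j>k}\psi(e^n/(j-k))\,((j-k)/e^n)^{d-1}$, a Riemann sum for $\int_0^1 \psi(1/y)\,y^{d-1}\,dy$, which converges because $\psi$ is subpower and $d>0$. Hence $\sup_n \E[\mu_n([1,2])^2]<\infty$, and Cauchy--Schwarz (Paley--Zygmund) gives $\PP\{\mu_n([1,2])>0\}\ge c>0$ uniformly in $n$.

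Next I would repeat the computation with the kernel $|x-y|^{-(d-\ee)}$ inserted: applying Theorem~\ref{thm:correlation} once more, the diagonal terms contribute $O(e^{-\ee n})$ and the off-diagonal terms now sum to a constant, since the extra factor shifts the exponent in the integral test to $\ee-1>-1$. Thus $\sup_n \E[\mathcal{E}_{d-\ee}(\mu_n)]<\infty$. Combining the three uniform estimates, a routine subsequence argument yields an event of positive probability on which $\mu_n\to\mu$ weakly along a subsequence with $\mu([1,2])>0$ (total mass is continuous under weak convergence on the compact $[1,2]$) and $\mathcal{E}_{d-\ee}(\mu)\le\liminf_n \mathcal{E}_{d-\ee}(\mu_n)<\infty$ (energy is lower semicontinuous). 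The indicators $I_n(x_{j,n})$ built into $\mu_n$ force, via parts (a)--(b) of Proposition~\ref{prop:event}, that any accumulation point $x$ of the supports satisfies $h_{\tau_s}'(x)=e^{-\beta s+o(s)}$ for all $s$, i.e.\ $x\in V_\beta$; hence $\mu$ is supported on $V_\beta$. Frostman's lemma then gives $\hdim V_\beta\ge d-\ee$ on this event, and constancy upgrades this to probability one, completing the proof of Theorem~\ref{thm:main}.

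I expect the main obstacle to be the extraction of a \emph{single} event of positive probability on which the weak limit is simultaneously nontrivial and of finite $(d-\ee)$-energy, together with the rigorous verification that this limit is carried by $V_\beta$ and not merely by a slightly larger set; making the latter precise requires care with the subpower corrections in Proposition~\ref{prop:event}(a)--(b) and with the $\mathcal{F}_{\tau_t}$-measurability that was deliberately built into $I_t$ via the conditional expectation in its definition. The genuinely quantitative content, however, is entirely carried by Theorem~\ref{thm:correlation} and Proposition~\ref{prop:event}, both already in hand.
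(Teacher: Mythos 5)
Your proposal is correct and follows essentially the same route as the paper: the same Frostman measures $\mu_n$ built from $e^{\nu n}h'_{\tau_n}(x_{j,n})^\lambda I_n(x_{j,n})$ on the dyadic-type intervals, the same first-moment normalization and second-moment/energy bounds via Proposition~\ref{prop:event} and Theorem~\ref{thm:correlation} (with identical diagonal/off-diagonal splitting and the integral test with exponent $\ee-1>-1$), the same Cauchy--Schwarz/Paley--Zygmund step to get a uniformly positive probability of nontriviality, and the same upgrade to an almost sure statement via the constancy of $\hdim V_\beta$. The subtleties you flag (simultaneous extraction of the limit on one positive-probability event, and verifying the limit is carried by $V_\beta$ via the indicators $I_n$) are exactly the points the paper also treats briefly.
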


\bibliographystyle{plain}

\end{document}